\theoremstyle{plain}
\newtheorem{theorem}{Theorem}[section]
\newtheorem{corollary}{Corollary}[section]
\newtheorem{lemma}{Lemma}[section]
\newtheorem{remark}{Remark}
\newtheorem{definition}{Definition}[section]
\numberwithin{equation}{section}
\newcommand{\rom}[1]{\uppercase\expandafter{\romannumeral #1\relax}}
\begin{document}
{
  \title{\bf Type $\rom{1}$, $\rom{2}$, $\rom{3}$ and $\rom{4}$ $q$-negative binomial distributions of order $k$}
 \author{Jungtaek Oh\thanks{Corresponding Author:
 Department of Biomedical Science, Kyungpook National University, Daegue, 41566, Republic of Korea
 (e-mail: jungtaekoh0191@gmail.com)}\hspace{.2cm}\\
}
  \maketitle
} 

\begin{abstract}
The distributions of waiting times in variations of the negative binomial distribution of order $k$. In one variation, we apply different enumeration schemes on the runs of successes. In another variation, binary trials with a geometrically varying probability of ones were performed. The exact distribution of the waiting time for the $r$-th occurrence of a success run of a specified length (e.g., nonoverlapping, overlapping, at least, exact, and $\ell$-overlapping) in a $q$-sequence of binary trials. First, the waiting time for the $r$-th occurrence of a success run with the "nonoverlapping" counting scheme was examined. Theorem \ref{pmftype1q-negative} gives a probability function of the Type $\rom{1}$ $q$-negative binomial distribution of order $k$. Next, we consider the waiting time for the $r$-th occurrence of a success run with the "at least" counting scheme. Theorem \ref{thmtype2q-nega}  gives a probability function of the Type $\rom{2}$ $q$-negative binomial distribution of order $k$. Next, we consider the waiting time for the $r$-th occurrence of a success run with the "overlapping" counting scheme. Theorem \ref{thmtype3q-nega} gives a probability function of the Type $\rom{3}$ $q$-negative binomial distribution of order $k$. Next, we consider the waiting time for the $r$-th occurrence of a success run with the "exact" counting scheme. Theorem \ref{thmtype4q-nega} gives a probability function of the Type $\rom{4}$ $q$-negative binomial distribution of order $k$. Next, we consider the waiting time for the $r$-th occurrence of a success run with the "$\ell$-overlapping" counting scheme, which is a more generalized counting scheme. Theorem \ref{pmfl-overq-nega} gives a probability function of the $q$-negative binomial distribution of order $k$ in the $\ell$-overlapping case.

The main theorems are Type $\rom{1}$, $\rom{2}$, $\rom{3}$, and $\rom{4}$ $q$-negative binomial distributions of order $k$ and the $q$-negative binomial distribution of order $k$ in the $\ell$-overlapping case. This work, examined sequences of independent binary zero and one trials with not necessarily identical distribution with the probability of ones varying according to a geometric rule. The exact formulas for the distributions were obtained using enumerative combinatorics.

\end{abstract}

\noindent%
{\it Keywords:}  Waiting time problems, Type $\rom{1}$, $\rom{2}$, $\rom{3}$, and $\rom{4}$ $q$-negative binomial distribution of order $k$, $q$-Negative binomial distribution of order $k$ in the $\ell$-overlapping case, Runs, Binary trials, $q$-Distributions
\vfill

%



\section{Introduction}
\citet{charalambides2010a} studied discrete $q$-distributions on Bernoulli trials with geometrically varying success probabilities. Let us consider a sequence $X_{1}$,...,$X_{n}$ of zero(failure)-one(success) Bernoulli trials, such that the trials of the subsequence after the $(i-1)$-th zero until the $i$-th zero are independent with equal failure probability. The $i$-th geometric sequences of trials is the subsequence after the $(i-1)$-th zero and until
the $i$-th zero, for $i>0$, and the subsequence after the $(j-1)$'st zero and until
the $j$'th zero, for $j>0$ are independent for all $i\neq j$ (i.e., the $i$-th and $j$-th geometric sequences are independent) with probability of zeroes at the $i$-th geometric sequence of trials.
\noindent
\begin{equation}\label{failureprobofq}
\begin{split}
q_{i}=1-\theta q^{i-1},\quad i=1,2,..., \quad 0\leq\theta\leq1,\quad 0\leq q<1.
\end{split}
\end{equation}
The probability of failures in the independent geometric sequences of trials increases geometrically with rate $q$.
Let $F_{j}=\Sigma_{m=1}^{j}(1-X_{m})$ denote the number of zeroes in the first $j$ trials. Because the probability of a zero at the $i$-th geometric sequence of trials is in fact the conditional probability of the occurrence of a zero at any trial $j$ given the occurrence of $(i-1)$ zeroes in the previous trials. We can express this situation as follows.
\noindent
\begin{equation}\label{failureprobofq2}
\begin{split}
q_{j,i}=p\Big(X_{j}=0\ \bigm|\ F_{j}=i-1\Big)=1-\theta q^{i-1},\quad i=1,2,...,j, \quad j=1,2,....
\end{split}
\end{equation}
\noindent
Here, \eqref{failureprobofq} is exactly equal to the conditional probability given in \eqref{failureprobofq2}. For a clear understanding, we consider an example with $n = 18$, i.e., the binary sequence 111011110111100110. Each subsequence has its own success and failure probabilities according to a geometric rule.

\begin{figure}[h]
 \begin{center}
\includegraphics[scale=0.65]{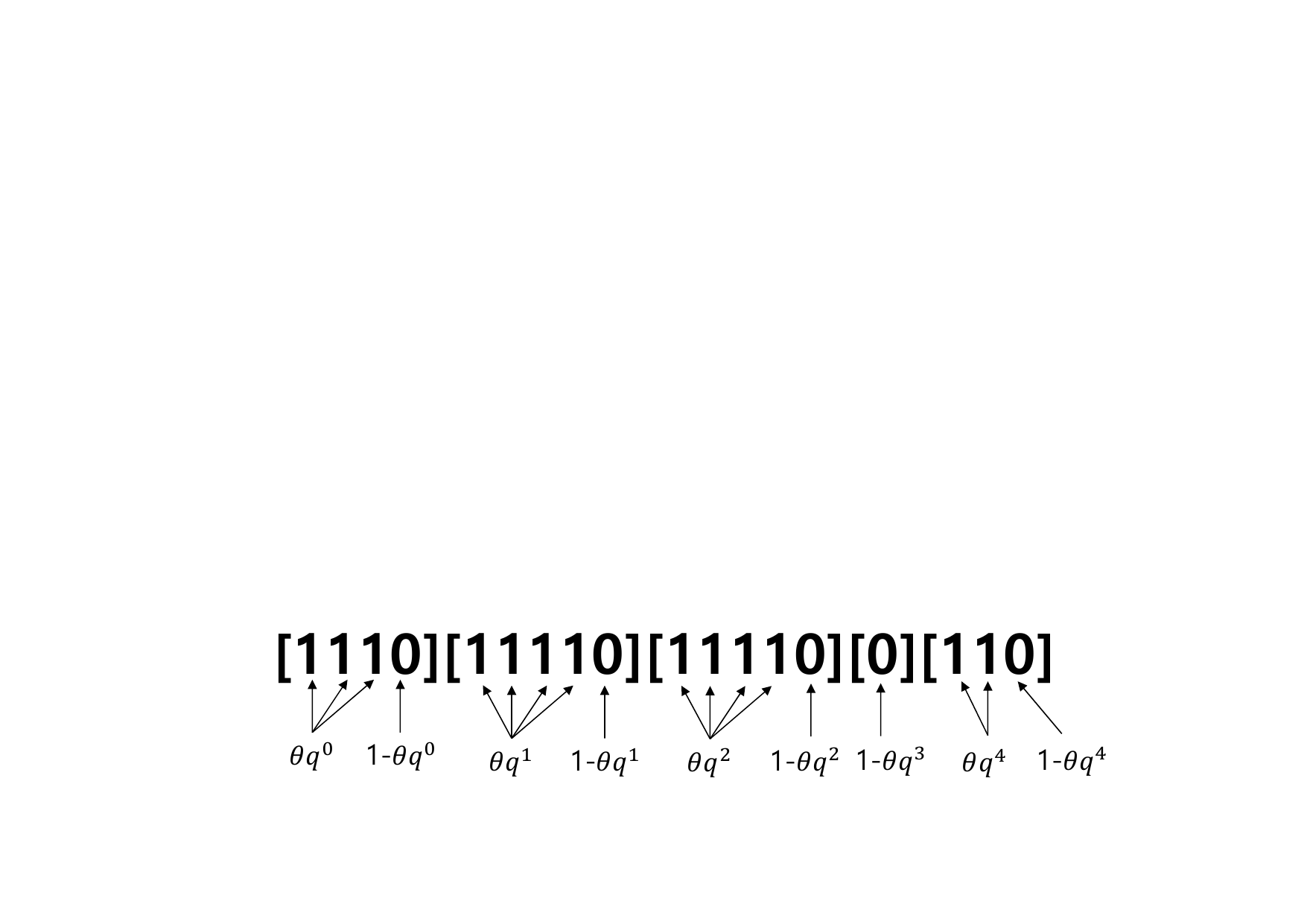}
\end{center}
  \vspace{-20pt}
\end{figure}

The stochastic model \eqref{failureprobofq} or \eqref{failureprobofq2} has interesting applications. It was studied as a reliability growth model by \cite{Dubman&Sherman1969} and applied to a $q$-boson theory in physics by \cite{Jing&Fan1994} and \cite{Jing1994}. More specifically, the $q$-binomial distribution was introduced as a $q$-deformed binomial distribution, to set up a $q$-binomial state. The stochastic model \eqref{failureprobofq} was also applied as a sequential-intervention model to start-up demonstration tests, as proposed by \cite{BBV1995}.\\

The stochastic model \eqref{failureprobofq} is the $q$-analog of the classical binomial distribution with geometrically varying probability of zeroes, which is a stochastic model of independent and identically distributed (IID) trials with the following failure probability:
\noindent
\begin{equation}\label{bernoullifailureprob}
\pi_{j}=P\left(X_{j}=0\right)=1-\theta,\ j=1,2,\ldots,\ 0<\theta<1.
\end{equation}
\noindent
As $q$ tends toward $1$, the stochastic model \eqref{failureprobofq} reduces  to IID (Bernoulli) model \eqref{bernoullifailureprob}, because $q_{i}\rightarrow\pi_{i}$, $i=1,2,\ldots$ or $q_{j,i}\rightarrow1-\theta$, $i=1,2,\ldots,j$, $j=1,2,\ldots.$\\

Discrete $q$-distributions based on the stochastic model of the sequence of independent Bernoulli trials have been explored by numerous researchers. For a lucid review and comprehensive list of publications in this area, the interested reader may consult the monographs by \cite{charalambides2010a,Charalambides2010b,Charalambides2016}.\\

From mathematical and statistical viewpoints, \cite{Charalambides2016}, in the preface of his book remarked upon the advantage of using a stochastic model of a sequence of independent Bernoulli trials, in which the probability of success in a trial is assumed to vary with the number of trials and/or the number of successes. Such a model is advantageous because it allows us to incorporate the experience gained from previous trials and/or successes. If the probability of success at a trial is a general function of the number of trials and/or the number of successes, little can be inferred about the distributions of the various random variables that cam possiblt be defined using this model. The assumption that the probability of success (or failure) at a trial varies geometrically, with the rate (proportion) $q$, leads to the introduction of discrete $q$-distributions.\\

Let us consider a distribution that is related to the success run analog of the classical negative binomial distribution. Let $X_{1},X_{2},\ldots $ be a sequence of binary trials with two possible outcomes (i.e., success or failure) in each trial. In this work, we examined the waiting time until the $r$-th (where $r$ is a positive integer) appearance of a success run of length $k$ by considering the enumeration scheme (i.e., nonoverlapping, at least, overlapping, exact, or $\ell$-overlapping scheme). Notably, the special case $r=1$ reduces to the geometric distribution of order $k$ (the distribution of the number of trials until the success run of length $k$, denoted as $T_{k}$). When considering the waiting distribution, different counting schemes are used, and each scheme generates a different type of waiting time distribution.\\

There are several ways to count a scheme. Each counting scheme depends on different conditions: whether overlapping counting is permitted and whether counting starts from scratch when a certain kind or size of run has been enumerated. \cite{feller1968introduction} proposed a classical counting method: once $k$ consecutive successes are observed, the number of occurrences of $k$ consecutive successes is counted, and the counting procedure starts anew (i.e., from scratch). This is called a \textit{nonoverlapping} counting scheme or Type $\rom{1}$ distributions of order $k$. In another scheme, success runs of length greater than or equal to $k$ preceded and followed by a failure or by the beginning or by the end of the sequence are counted(see., e.g., \citet{mood1940distribution}), and this is usually called the \textit{at least} counting scheme or Type $\rom{2}$ distribution of order $k$. \citet{ling1988binomial} suggested the \textit{overlapping} counting scheme, where in an uninterrupted sequence of $m\geq k$ successes preceded and followed by a failure or by the beginning or by the end of the sequence are counted. It accounts for $m-k+1$ success runs of length of $k$ and is also referred to as Type $\rom{3}$ distributions of order $k$. \citet{mood1940distribution} suggested an \textit{exact} counting scheme, wherein success runs of length exactly $k$ preceded and succeeded by failure or by nothing are counted. This scheme is also referred to as Type $\rom{4}$ distributions of order $k$.\\
It is well known that the negative binomial distribution arises as the distribution of the sum of $r$ independent random variables
 following the geometric distribution with parameter $p$. The random variable $W_{r,k}^{(a)}$ denoted by the waiting time for the $r$-th occurrence of a success run with the counting scheme used $a=\rom{1}$, which indicates the nonoverlapping counting scheme; $a=\rom{2}$, which indicates the at least counting scheme; $a=\rom{3}$, which indicates the overlapping one; and $a=\rom{4}$, which indicates the exactly one these schemes are denoted as $W_{r,k}^{(\rom{1})}$, $W_{r,k}^{(\rom{2})}$, $W_{r,k}^{(\rom{3})}$, and $W_{r,k}^{(\rom{4})}$, respectively.  In addition, if the sequence is an IID sequence of random variables $X_{1},X_{2},\ldots$, then the distributions of $W_{r,k}^{(\rom{1})}$, $W_{r,k}^{(\rom{2})}$, $W_{r,k}^{(\rom{3})}$ and $W_{r,k}^{(\rom{4})}$ will be referred to as Type $\rom{1}$, $\rom{2}$, $\rom{3}$ and $\rom{4}$ negative binomial distributions of order $k$ and will be denoted as $NB_{k}^{(\rom{1})}(r,\theta)$, $NB_{k}^{(\rom{2})}(r,\theta)$, $NB_{k}^{(\rom{3})}(r,\theta)$, and $NB_{k}^{(\rom{4})}(r,\theta)$, respectively.\\
When the sequence is a $q$-geometric model, the distributions of $W_{r,k}^{(\rom{1})}$, $W_{r,k}^{(\rom{2})}$, $W_{r,k}^{(\rom{3})}$ and $W_{r,k}^{(\rom{4})}$ have been called Type $\rom{1}$, $\rom{2}$, $\rom{3}$ and $\rom{4}$ $q$-negative binomial distributions of order $k$, respectively. They can be denoted as $q-NB_{k}^{(\rom{1})}(r,\theta)$, $q-NB_{k}^{(\rom{2})}(r,\theta)$, $q-NB_{k}^{(\rom{3})}(r,\theta)$ and $q-NB_{k}^{(\rom{4})}(r,\theta)$, respectively.\\

According to the four aforementioned counting schemes, the random variables of the number of runs of length $k$ counted in $n$ outcomes have four distributions denoted as $N_{n,k}$, $G_{n,k}$, $M_{n,k}$, and $E_{n,k}$, respectively. Moreover, if the sequence is an IID sequence of random variables, $X_{1},X_{2},\ldots ,X_{n}$, then the distributions of $N_{n,k}$, $G_{n,k}$, $M_{n,k}$, and $E_{n,k}$ will be referred to as Type $\rom{1}$, $\rom{2}$, $\rom{3}$, and $\rom{4}$ binomial distributions of order $k$ and will be denoted as $B_{k}^{(\rom{1})}(n,\theta)$, $B_{k}^{(\rom{2})}(n,\theta)$, $B_{k}^{(\rom{3})}(n,\theta)$, and $B_{k}^{(\rom{4})}(n,\theta)$, respectively.\\
When the sequence is a $q$-geometric model, the distributions of $N_{n,k}$, $G_{n,k}$, $M_{n,k}$, and $E_{n,k}$ are called Type $\rom{1}$, $\rom{2}$, $\rom{3}$, and $\rom{4}$ $q$-binomial distribution of order $k$, respectively. These distributions can be denoted as $q-B_{k}^{(\rom{1})}(n,\theta)$, $q-B_{k}^{(\rom{2})}(n,\theta)$, $q-B_{k}^{(\rom{3})}(n,\theta)$, and $q-B_{k}^{(\rom{4})}(n,\theta)$, respectively.\\

To further clarify the distinctions among the aforementioned counting methods, we consider the example $n=12$; in this case, the binary sequence $011111000111$ contains $N_{12,2}=3$, $G_{12,2}=2$, $M_{12,2}=6$, $E_{12,5}=1$,  $W_{2,2}^{(\rom{1})}=5$, $W_{2,2}^{(\rom{2})}=11$, $W_{2,2}^{(\rom{3})}=4$, and $W_{2,3}^{(\rom{4})}>12$.

\citet{aki2000numbers} introduced a more generalized counting scheme called the $\ell$-overlapping counting scheme, where $\ell$ is a nonnegative integer less than $k$ (see also \citet{han2000unified}; \citet{antzoulakos2003waiting} ; \citet{inoue2003generalized}; \citet{makri2005binomial}; \citet{makri2007polya} ; \citet{makri2015}). This scheme counts a success run of length  $k$ each of which may have an overlapping (common) part of length at most $\ell$ ($\ell=0,1,\ldots,k-1$) with the previous run of success of length $k$ that has already been enumerated. The nonoverlapping case ($\ell=0$) and the overlapping case ($\ell=k-1$) are special cases of
this scheme.

The random variable $W_{r,k,\ell}$ denotes the waiting time for the $r$-th occurrence of the $\ell$-overlapping success run of length $k$.
If the sequence is an IID sequence of random variables $W_{r,k,\ell}$, it will be referred to as a negative binomial distribution of order $k$ in the $\ell$-overlapping case and denoted as $NB_{k,\ell}(r,\theta)$.\\
According to the counting scheme mentioned earlier, the random variables of the number of $\ell$-overlapping success runs of length $k$ counted in $n$ outcomes are denoted as $N_{n,k,\ell}$. Moreover, if the sequence is an IID sequence of random variables, $X_{1},X_{2},\ldots ,X_{n}$, then distributions of $N_{n,k,\ell}$ will be referred to as binomial distributions of order $k$ in the $\ell$-overlapping case and denoted as $B_{k,\ell}(n,\theta)$.\\
When the sequence is a $q$-geometric model, the distributions of $W_{r,k,\ell}$ and $N_{n,k,\ell}$ are called  $q$-negative binomial distribution of order $k$ in the $\ell$-overlapping case and $q$-binomial distribution of order $k$ in the $\ell$-overlapping case, respectively. They are denoted as $q-NB_{k,\ell}(r,\theta)$ and $q-B_{n,k,\ell}(n,\theta)$, respectively.\\
To explain this better, let us assume that n = 15 binary trials, numbered from 1 to 15, are
performed, and we obtain the following outcomes 111111011110111. Then, the $\ell$-overlapping
1-runs of length 4 are as follows: 1,2,3,4; 3,4,5,6; 8,9,10,11 for $\ell = 2$, and 1,2,3,4; 2,3,4,5; 3,4,5,6;
8,9,10,11 for $\ell = 3$. Hence, $N_{15,4,2} = 3$ and $N_{15,4,3} = 4$.

Let $N_{n}^{(a)}$, $a=\rom{1},\rom{2},\rom{3}$ be a random variable denoting the number of occurrences of runs in the sequence of $n$ trials; $N_{n}^{(a)}$, $a=\rom{1},\rom{2},\rom{3}$, which is coincident with $N_{n,k}$, $G_{n,k}$, and $M_{n,k}$, respectively. The random variable $N_{n}$ is closely related to the random variable $W_{r,k}$ (see \citet{feller1968introduction}). We have the following dual relationship $N_{n}^{(a)}<r$ if and only if $W_{r,k}^{(a)}>n$ for $a=\rom{1},\rom{2},\rom{3}$. Now, the probability function of the $q$-binomial distribution of order $k$, can be derived easily using the dual relationship between the binomial and negative binomial distributions of order $k$:
\begin{equation*}
\begin{split}
&P_{q,\theta}\left(N_{n}^{(\rom{1})}<r\right)=P_{q,\theta}\left(W_{r,k}^{(\rom{1})}>n\right),\ P_{q,\theta}\left(N_{n}^{(\rom{2})}<r\right)=P_{q,\theta}\left(W_{r,k}^{(\rom{2})}>n\right)\\
&P_{q,\theta}\left(N_{n}^{(\rom{3})}<r\right)=P_{q,\theta}\left(W_{r,k}^{(\rom{3})}>n\right).
\end{split}
\end{equation*}
However, the above mentioned dual relationship cannot be considered by the Type $\rom{4}$ enumeration scheme. Because  $W_{r,k}^{(\rom{4})}>n$ implies $N_{n}^{(\rom{4})}<r$, instead of an iff relationship.

The $q$-negative binomial distribution with parameters $q$, $\theta$, $k$, and $r$ is a distribution of the length of a sequence of Bernoulli trials with geometrically increasing failure probability until the $r$-th appearance of a success run of length $k$. Here, $0 < q, \theta < 1$, and $r$ and $k$ are positive integers. In this study, we examined the waiting time distribution for the $r$-th appeareance of a success run of length $k$ (nonoverlapping, at least, overlapping, and $\ell$-overlapping), and the probability of ones vary according to a geometric rule. The remainder of this paper is organized as follows: In Section 2, we introduce the basic definitions and necessary notations that will be useful throughout this article. In Section 3, we will study the Type $\rom{1}$ $q$-negative binomial distribution of order $k$. We derive the exact probability function of the Type $\rom{1}$ $q$-negative binomial distribution of order $k$ via combinatorial analysis. In Section 4, we will study the Type $\rom{2}$ $q$-negative binomial distribution of order $k$. We derived the exact probability function of the Type $\rom{2}$ $q$-negative binomial distribution of order $k$ via combinatorial analysis. In Section 5, we examine the Type $\rom{3}$ $q$-negative binomial distribution of order $k$. We derive the exact probability function of the Type $\rom{3}$ $q$-negative binomial distribution of order $k$ via combinatorial analysis. In Section 6, we will study the Type $\rom{4}$ $q$-negative binomial distribution of order $k$. We derive the exact probability function of the Type $\rom{4}$ $q$-negative binomial distribution of order $k$ using combinatorial analysis. In Section 7, we examine the waiting time for the $r$-th $\ell$-overlapping occurrence of a success run of length $k$ in a $q$-geometric sequence. The exact probability function of the $q$-negative binomial distribution of order $k$ in the $\ell$-overlapping case was derived via combinatorial analysis.

\section{Terminology and notation}
We recall some definitions, notations, and known results that will be used in this paper. Throughout this work, we suppose that $0<q<1$. First, we introduce the following notation.
\begin{itemize}
\item $S_{n}:$ the total number of successes in $X_{1}, X_{2}, \ldots, X_{n}$;
\item $F_{n}:$ the total number of failures in $X_{1}, X_{2}, \ldots, X_{n}$.
\end{itemize}
\noindent
Next, we introduce some basic $q$-sequences and functions and their properties, which are useful in the rest of the paper. The $q$-shifted factorials are defined as
\begin{equation}\label{}
(a;q)_{0}=1,\quad (a;q)_{n} =\prod_{k=0}^{n-1}(1-aq^{k}),\quad (a;q)_{\infty} =\prod_{k=0}^{\infty}(1-aq^{k}).
\end{equation}
\noindent
Let $m$, $n$, and $i$ be positive integers and $z$ and $q$ be real numbers, with $q\neq 1$. The number $[z]_{q} = (1-q^{z})/(1-q)$ is called the $q$-number, and in particular, $[z]_{q}$ is called the $q$-integer. The $m$ th order factorial of the $q$-number $[z]_{q}$, which is defined by
\begin{equation}\label{}
\begin{split}
[z]_{m,q}&=\prod_{i=1}^{m}[z-i+1]_{q}= [z]_{q} [z-1]_{q}\cdots[z-m+1]_{q}\\
&=\frac{(1-q^{z})(1-q^{z-1})\cdots(1-q^{z-m+1})}{(1-q)^{m}},\ z=1,2,\ldots,\ m=0,1,\ldots,z.
\end{split}
\end{equation}
\noindent
Is called $q$-factorial of $z$ of order $m$. In particular, $[m]_{q}! = [1]_{q}[2]_{q}...[m]_{q}$ is called $q$-factorial of $m$. The $q$-binomial coefficient (or Gaussian polynomial) is defined as follows:
\noindent
\begin{equation}\label{}
\begin{split}
\begin{bmatrix}
n\\
m
\end{bmatrix}_{q}
&=\frac{[n]_{m,q}}{[m]_{q}!}=\frac{[n]_{q}!}{[m]_{q}![n-m]_{q}!}=\frac{(1-q^{n})(1-q^{n-1})\cdots(1-q^{n-m+1})}{(1-q^{m})(1-q^{m-1})\cdots(1-q)}\\
&=\frac{(q;q)_{n}}{(q;q)_{m}(q;q)_{n-m}},\ m=1,2,\ldots,
\end{split}
\end{equation}
\noindent
The $q$-binomial ($q$-Newton's binomial) formula is expressed as follows:
\noindent
\begin{equation}\label{}
\prod_{i=1}^{n}(1+zq^{i-1})=\sum_{k=0}^{n}q^{k(k-1)/2}\begin{bmatrix}
n\\
k
\end{bmatrix}_{q}
z^{k},\ -\infty<z<\infty,\ n=1,2,\ldots.
\end{equation}
\noindent
For $q\rightarrow 1$, the $q$-analogs correspond to their classical counterparts, that is,
\noindent
\begin{equation*}\label{eq: 1.1}
\begin{split}
\lim\limits_{q\rightarrow 1}
\begin{bmatrix}
n\\
r
\end{bmatrix}_{q}
={n \choose r}
\end{split}
\end{equation*}
\noindent
Now, let us consider the sequence of independent geometric sequences of trials with the probability of failure at the $i$th geometric sequence of trials given by \eqref{failureprobofq} or \eqref{failureprobofq2}. We now focus on studying the number of successes in a given number of trials in this stochastic model.
\begin{definition}
Let $Z_{n}$ be the number of successes in a sequence of $n$ independent Bernoulli trials, with the probability of success at the $i$-th geometric sequence of trials given by \eqref{failureprobofq} or \eqref{failureprobofq2}. The distribution of the random variable $Z_{n}$ is called the $q$-binomial distribution, with parameters $n$, $\theta$, and $q$.
\end{definition}
\noindent
Let us introduce a $q$-analog of the binomial distribution. The probability function of the number $Z_{n}$ of successes in $n$ trials $X_{1}, \ldots, X_{n}$ is given by
\begin{equation}\label{q-binomialdist}
\begin{split}
P_{q,\theta}\{Z_{n}=r\}=
\begin{bmatrix}
n\\
r
\end{bmatrix}_{q}
\theta^{r}\prod_{i=1}^{n-r}(1-\theta q^{i-1}),
\end{split}
\end{equation}\\
For $r = 0, 1, . . . , n$, $0 < q < 1$. This distribution  is called a $q$-binomial distribution. For $q \rightarrow 1$, because
\begin{equation*}\label{eq: 1.1}
\begin{split}
\lim\limits_{q\rightarrow 1}
\begin{bmatrix}
n\\
r
\end{bmatrix}_{q}
={n \choose r}
\end{split}
\end{equation*}
the $q$-binomial distribution converges to the usual binomial distribution as $q \rightarrow 1$, as follows:
\begin{equation}
P_{\theta}\left(Z_{n}=r\right)={n\choose r}\theta^{r}(1-\theta)^{n-r},\ r=0,1,\ldots,n,
\end{equation}
with parameters $n$ and $\theta$.
The $q$-binomial distribution was studied by \citet{charalambides2010a,Charalambides2016} and is re3ated t6 the $q$-Berstein polynomial. \citet{Jing1994} introduced probability function \eqref{q-binomialdist} as a $q$-deformed binomial distribution, and derived the recurrence relation of its probability distribution.
\noindent
In the sequel, $P_{q ,\theta}(.)$ and $P_{\theta}(.)$ denote the probabilities related to the stochastic models \eqref{failureprobofq} and \eqref{bernoullifailureprob}, respectively.

\section{Type $\rom{1}$ $q$-negative binomial distribution of order $k$}
In this section, we will study the Type $\rom{1}$ $q$-negative binomial distribution of order $k$. Let us consider the waiting time for the $r$-th occurrence of a success run of length $k$. For $r\in N$ and $k\in N$, let $W_{r,k}^{(\rom{1})}$ be the waiting time for the $r$-th appearance of a run of successes of length $k$. We use the nonoverlapping counting scheme (Type \rom{1} enumeration scheme;  \citet{feller1968introduction}). In this scheme, once $k$ consecutive successes show up in a run of successes of length $k$ are counted, the counting procedure starts anew (from scratch). The support (range set) of $W_{r,k}^{(\rom{1})}$, $\mathfrak{R}\left(W_{r,k}^{(\rom{1})}\right)$ is given by
\noindent
\begin{equation*}
\begin{split}
\mathfrak{R}\left(W_{r,k}^{(\rom{1})}\right)=\{kr,kr+1,\ldots\}.
\end{split}
\end{equation*}
\noindent
We now state a useful definition and lemma for the proofs of the theorem in the sequel.
\begin{definition}
For $0<q\leq1$, define
\begin{equation*}\label{eq: 1.1}
\begin{split}
 A_{q}^{k}(r,s,t)=\sum_{\substack{y_{1},y_{2},\ldots,y_{r}}}   q^{y_{2}+2y_{3}+\cdots+(r-1) y_{r}}\\
\end{split}.
\end{equation*}

\noindent
where the summation is performed over all integers $y_1,\ldots,y_{r}$ satisfying
\noindent

\begin{equation*}\label{eq:1}
\begin{split}
y_{1}+y_{2}+\cdots+y_{r}=s,
\end{split}
\end{equation*}
\noindent
\begin{equation*}\label{eq:1}
\begin{split}
\left[\frac{y_{1}}{k}\right]+\left[\frac{y_{2}}{k}\right]+\cdots+\left[\frac{y_{r}}{k}\right]=t,\ \text{and}
\end{split}
\end{equation*}
\noindent
\begin{equation*}\label{eq:2}
\begin{split}
y_{j}\geq 0,\quad j=1,\ \ldots,\ r.
\end{split}
\end{equation*}

\end{definition}
\noindent
The following gives a recurrence relation useful for computing of $A_{q}^{k}(r,s,t)$.\\
\noindent

\begin{lemma}
\label{aq_fun}
{\rm{[\citet{yalcin2014q}]}} For $0<q\leq1$, $A_{q}^{k}(r,s,t)$ obeys the following recurrence relation,
\noindent
\begin{equation*}\label{eq: 1.1}
\begin{split}
A_{q}^{k}&(r,s,t)\\
=&\left\{
  \begin{array}{ll}
    \sum_{j=0}^{k-1} q^{(r-1)j} A_{q}^{k}(r-1,s-j,t)\\
    +\sum_{j=k}^{s} q^{(r-1)j}A_{q}^{k}\left(r-1,s-j,t-\left[\frac{j}{k}\right]\right), & \text{if $r>1$, $s\geq0$ and $t\geq 0$} \\
    1, & \text{if $r=1$, $s\geq0$ and $t=\left[\frac{s}{k}\right]$} \\
    0, & \text{otherwise.}\\
  \end{array}
\right.
\end{split}
\end{equation*}

\end{lemma}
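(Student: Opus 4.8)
The plan is to prove the recurrence by conditioning on the value of the last index $y_{r}$ in the defining sum and recognizing the resulting inner sum as an instance of $A_{q}^{k}$ with one fewer variable. Throughout I would adopt the convention that $A_{q}^{k}(r-1,s',t')=0$ whenever the arguments are infeasible (for instance $s'<0$ or $t'<0$), which is exactly the ``otherwise'' branch of the claimed formula and lets me extend index ranges freely without introducing spurious terms.

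First I would dispose of the base case $r=1$. Here the defining sum ranges over the single variable $y_{1}$ subject to $y_{1}=s$ and $[y_{1}/k]=t$, while the exponent $y_{2}+2y_{3}+\cdots+(r-1)y_{r}$ is an empty sum equal to $0$. Hence there is exactly one admissible tuple, contributing $q^{0}=1$, precisely when $s\geq 0$ and $t=[s/k]$; otherwise the index set is empty and the value is $0$. This matches the second and third branches directly.

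For the inductive step $r>1$, I would fix $y_{r}=j$ for each integer $j$ with $0\leq j\leq s$ and peel off its contribution. The factor $q^{(r-1)y_{r}}=q^{(r-1)j}$ comes out of the sum, and the remaining exponent $y_{2}+2y_{3}+\cdots+(r-2)y_{r-1}$ together with the residual constraints $y_{1}+\cdots+y_{r-1}=s-j$ and $[y_{1}/k]+\cdots+[y_{r-1}/k]=t-[j/k]$ is exactly the definition of $A_{q}^{k}(r-1,s-j,t-[j/k])$. This yields the single-sum identity
\begin{equation*}
A_{q}^{k}(r,s,t)=\sum_{j=0}^{s}q^{(r-1)j}\,A_{q}^{k}\!\left(r-1,\,s-j,\,t-\left[\tfrac{j}{k}\right]\right).
\end{equation*}
I would then split this sum at $j=k$: for $0\leq j\leq k-1$ one has $[j/k]=0$, producing the first sum with second run-argument $t$ unchanged, while for $k\leq j\leq s$ the floor $[j/k]$ is at least $1$ and the terms are left as written, producing the second sum. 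This reproduces the first branch.

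The argument is a clean decomposition, so I do not expect a deep obstacle; the main points requiring care are bookkeeping ones. I would verify that removing $y_{r}$ correctly reindexes the weight, so that the coefficient of each surviving $y_{i}$ remains $i-1$ for $i\leq r-1$ and therefore agrees with the exponent built into $A_{q}^{k}(r-1,\cdot,\cdot)$, and I would confirm that the shift $t\mapsto t-[j/k]$ is tracked consistently, including the degenerate cases where $s-j<0$ or $t-[j/k]<0$ so that the corresponding term vanishes by the boundary convention. These checks ensure no admissible configuration is double-counted or omitted as the index $j$ ranges over both sums.
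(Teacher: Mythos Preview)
Your argument is correct: conditioning on $y_{r}=j$, factoring out $q^{(r-1)j}$, and identifying the residual sum as $A_{q}^{k}(r-1,s-j,t-[j/k])$ is exactly the right decomposition, and the split at $j=k$ reproduces the two displayed sums. Note that the paper does not actually supply a proof of this lemma---it is quoted as a known result from \citet{yalcin2014q}---so there is no in-paper argument to compare against; your derivation is the standard one and would serve as a self-contained proof.
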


\begin{remark}
{\rm
We observe that $A_{1}^{k}(r,s,t)$ is the number of integer solutions $(y_{1},\ \ldots,\ y_{r})$ of
\noindent
\begin{equation*}\label{eq:1}
\begin{split}
y_{1}+y_{2}+\cdots+y_{r}=s,
\end{split}
\end{equation*}
\noindent
\begin{equation*}\label{eq:1}
\begin{split}
\left[\frac{y_{1}}{k}\right]+\left[\frac{y_{2}}{k}\right]+\cdots+\left[\frac{y_{r}}{k}\right]=t,\ \text{and}
\end{split}
\end{equation*}
\noindent
\begin{equation*}\label{eq:2}
\begin{split}
y_{j}\geq 0,\quad j=1,\ \ldots,\ r,
\end{split}
\end{equation*}
\noindent
indicating the total number of arrangements of $s$ balls in $r$ distinguishable cells. Hence, $t$ nonoverlapping runs of balls of length $k$ is given by
\noindent
\begin{equation*}\label{eq: 1.1}
\begin{split}
A_{1}^{k}(r,s,t)={r+t-1 \choose t}S(s-kt,r,k-1),
\end{split}
\end{equation*}
\noindent
where $S(a,\ b,\ c)$ denotes the total number of integer solutions $x_{1}+x_{2}+\cdots+x_{a}=c$ such that $0<x_{i}<b$ for $i=1,2,\ldots,a$. The number can be expressed as
\noindent
\begin{equation*}\label{eq: 1.1}
\begin{split}
S(a,\ b,\ c)=\sum_{j=0}^{a}(-1)^{j}{a \choose j}{c-j(b-1)-1 \choose a-1}.
\end{split}
\end{equation*}
\noindent
For example, see \citet{charalambides2002enumerative}.
}
\end{remark}
\noindent
The probability function of the Type $\rom{1}$ $q$-negative binomial distribution of order $k$ is obtained from the following theorem. It is evident that
\noindent
\begin{equation*}
P_{q,\theta}\left(W_{r,k}^{(\rom{1})}=n\right)=0\ \text{for}\ 0\leq n<rk
\end{equation*}
\noindent
and hence, we focus on determining the probability mass function (PMF) for $n\geq rk$.
\noindent
\begin{theorem}
\label{pmftype1q-negative}
The PMF $w_{q}^{(\rom{1})}(n;r,k;\theta)=P_{q,\theta}\left(W_{r,k}^{(\rom{1})}=n\right)$ is given by
\begin{equation*}\label{pmftype1q-nega}
\begin{split}
&w_{q}^{(\rom{1})}(n;r,k;\theta)=\\
&\left\{
  \begin{array}{ll}
    {\sum_{i=1}^{n-rk}} \theta^{n-i}q^{ik}{\prod_{{j}=1}^i}(1-\theta q^{j-1})  A_{q}^{k}(i,n-k-i,r-1)  & \text{if $n\geq rk+1$,} \\
    \theta^{kr} & \text{if $n=rk$,} \\
    0, & \text{if n $<$ rk.}\\
  \end{array}
\right.
\end{split}
\end{equation*}
\end{theorem}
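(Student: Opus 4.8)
The plan is to compute $P_{q,\theta}\bigl(W_{r,k}^{(\rom{1})}=n\bigr)$ by summing the $q$-weights of all binary strings $x_1\cdots x_n$ for which the $r$-th non-overlapping success run of length $k$ is completed exactly at trial $n$. The first step is a weight lemma for a single realization. Reading off the model \eqref{failureprobofq2}, a trial that is the $m$-th zero contributes the factor $1-\theta q^{m-1}$, while a success preceded by exactly $m$ zeros contributes $\theta q^{m}$. Hence a string with $i$ zeros and success-groups $y_0,y_1,\ldots,y_i$, where $y_m$ counts the successes preceded by exactly $m$ zeros, has probability
\begin{equation*}
\theta^{\,n-i}\,q^{\,\sum_{m=1}^{i} m\,y_m}\,\prod_{m=1}^{i}\bigl(1-\theta q^{m-1}\bigr).
\end{equation*}
This already isolates the three factors $\theta^{\,n-i}$, $\prod_{j=1}^{i}(1-\theta q^{j-1})$ and a power of $q$ in the statement, with $i$ playing the role of the total number of zeros; the summation index $i$ will therefore be the number of failures, ranging from $1$ (when $n\ge rk+1$) up to $n-rk$, since $r$ runs require at least $rk$ successes and so $n-i\ge rk$.

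The second step is the combinatorial decomposition of $\{W_{r,k}^{(\rom{1})}=n\}$. I would peel off the terminal run: trials $n-k+1,\ldots,n$ are successes forming the $r$-th counted run, and (for $n>rk$) trial $n-k$ is a zero, so that this run is counted afresh. What remains is a prefix $x_1\cdots x_{n-k-1}$ containing exactly $r-1$ non-overlapping runs of length $k$ and exactly $i-1$ further zeros, hence $n-k-i$ successes. Splitting the prefix at its $i-1$ zeros produces exactly $i$ success-groups $y_1,\ldots,y_i$ with $y_1+\cdots+y_i=n-k-i$ and $[y_1/k]+\cdots+[y_i/k]=r-1$, which are precisely the configurations summed over in the definition of $A_q^k(i,n-k-i,r-1)$.

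The third step matches the $q$-weight. The success-group preceded by exactly $m$ zeros carries weight $q^{m}$ per success, so the prefix contributes $q^{\,y_2+2y_3+\cdots+(i-1)y_i}$, exactly the exponent appearing in the summand of $A_q^k$, while the terminal run, sitting after all $i$ zeros, contributes the extra factor $q^{ik}$. Multiplying the weight lemma by the enumerating sum $A_q^k(i,n-k-i,r-1)$ and summing over $i=1,\ldots,n-rk$ then yields the stated formula for $n\ge rk+1$; the case $n=rk$ is the single all-success string $1^{rk}$, whose weight is $\theta^{rk}$, and $n<rk$ is impossible. Lemma \ref{aq_fun} finally renders $A_q^k$ computable through its recurrence.

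The main obstacle is the rigorous justification of the peeling step, i.e.\ that the map from contributing strings to (prefix, terminal-run) pairs is a weight-preserving bijection onto the configurations enumerated by $A_q^k$. The delicate point is the status of trial $n-k$ and, more generally, of the boundaries between consecutive counted runs: in the non-overlapping scheme the counter restarts after each completed run, so separating the $r$-th run as an isolated terminal block preceded by a zero must be shown to account for each contributing string exactly once and with the correct accumulated power of $q$. Establishing this normal form, together with the constraint $[y_1/k]+\cdots+[y_i/k]=r-1$ that encodes ``exactly $r-1$ runs in the prefix,'' is where the combinatorial care concentrates, and it is the step I would scrutinise most closely before declaring the proof complete.
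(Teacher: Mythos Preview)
Your peeling step contains a genuine gap. The assertion that ``(for $n>rk$) trial $n-k$ is a zero'' is false in the non-overlapping scheme: the $r$-th counted run can abut the $(r-1)$-th run with no intervening failure, because the counter restarts immediately after a run completes. Concretely, take $k=2$, $r=2$, $n=5$ and the string $01111$: the first run is at trials $2$--$3$, the second at trials $4$--$5$, so $W_{2,2}^{(\rom{1})}=5=n$, yet $X_{n-k}=X_{3}=1$. Your decomposition therefore misses every string in which two or more counted runs occur back-to-back in the terminal success block, and the resulting sum undercounts.

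The paper's proof handles this differently. It does \emph{not} force a zero at position $n-k$; instead it parses the first $n-k$ trials (not $n-k-1$) with their $i$ zeros into $i+1$ success groups $y_{1},\ldots,y_{i+1}$ and then appends the final $k$ successes directly after $y_{i+1}$. The terminal stretch of successes thus has length $y_{i+1}+k$, and the possibility $y_{i+1}\geq k$ is precisely what accommodates abutting runs such as the example above. The constraint becomes $\lfloor y_{1}/k\rfloor+\cdots+\lfloor y_{i+1}/k\rfloor=r-1$, so the paper's inner sum runs over $i+1$ cells rather than the $i$ cells in your proposal. You correctly flagged the status of trial $n-k$ as the delicate point, but resolved it the wrong way: the right normal form must allow trial $n-k$ to be either a failure \emph{or} the endpoint of the $(r-1)$-th run, and the extra cell $y_{i+1}$ is what encodes this dichotomy.
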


\begin{proof}
First, we examine $w_{q}^{(\rom{1})}(rk;r,k;\theta)$. Clearly, $w_{q}^{(\rom{1})}(rk;r,k;\theta)=\big(\theta q^{0}\big)^{k}=\theta^{rk}$. Hereinafter, we assume $n > rk.$ By the definition of $W_{r,k}^{(\rom{1})}$ every sequence of $n$ binary trials belonging to event $W_{r,k}^{(\rom{1})}=n$ must end with $k$ successes. The event $W_{r,k}^{(\rom{1})}=n$ can be expressed as
$$\left\{W_{r,k}^{(\rom{1})}=n\right\}=\left\{N_{n-k,k}=r-1\ \wedge\ X_{n-k+1}=\cdots=X_{n}=1\right\}.$$
\noindent
We partition the event $W_{r,k}^{(\rom{1})}=n$ into disjointed events given by $F_{n}=i,$ for $i=1,\ldots,n-rk.$ Adding the probabilities, we have
\noindent
\begin{equation*}\label{eq:bn}
\begin{split}
P_{q,\theta}\left(W_{r,k}^{(\rom{1})}=n\right)=\sum_{i=1}^{n-rk}P_{q,\theta}\Big(N_{n-k,k}=&r-1\ \wedge\ F_{n}=i\ \wedge\\
&X_{n-k+1}=\cdots=X_{n}=1\Big).\\
\end{split}
\end{equation*}
\noindent
If the number of zeroes in the first $n-k$ trials is equal to $i,$ that is, $F_{n-k}=i,$ then for each of the $(n-k+1)$ to $n$-th trials, the probability of success is
\noindent
\begin{equation*}\label{eq: kk}
\begin{split}
p_{n-k+1}=\cdots=p_{n}=\theta q^{i}.
\end{split}
\end{equation*}
\noindent
This probability can be rewritten as follows.
\noindent
\begin{equation*}\label{eq:bn1}
\begin{split}
P_{q,\theta}\left(W_{r,k}^{(\rom{1})}=n\right)=\sum_{i=1}^{n-rk}&P_{q,\theta}\Big(N_{n-k,k}=r-1\ \wedge\ F_{n-k}=i\Big)\\
&\times P_{q,\theta}\Big(X_{n-k+1}=\cdots =X_{n}=1\mid F_{n-k}=i\Big)\\
=\sum_{i=1}^{n-rk}&P_{q,\theta}\Big(N_{n-k,k}=r-1\ \wedge\ S_{n}=i\Big)\Big(\theta q^{i}\Big)^{k}.\\
\end{split}
\end{equation*}
\noindent
An element of the event $\left\{W_{r,k}^{(\rom{1})}=n,\ F_{n}=i\right\}$
is an ordered sequence consisting of $n-i$ successes and $i$ failures such that the length of the success run is nonnegative integer, $r$ nonoverlapping runs of success of length $k$ and end with $k$ successes. The number of these sequences can be derived as follows: First, we distribute the $i$ failures. Note that $i$ failures form $i+1$ cells. Next, we will distribute the $n-i-k$ successes into $i+1$ distinguishable cells as follows.
\noindent
\begin{equation*}\label{eq: 1.1}
\begin{split}
\overbrace{{\underbrace{1\ldots1}_{y_{1}}}0{\underbrace{1\ldots1}_{y_{2}}}0\ldots0{\underbrace{1\ldots1}_{y_{i}}}0{\underbrace{1\ldots1}_{y_{i+1}}}}^{n-k}{\boxed{{{\underbrace{1\ldots1}_{k}}}}}
\end{split}
\end{equation*}
\noindent
with $i$ 0s and $n-i$ 1s, where the length of the first 1-run is $y_{1}$, the length of the second 1-run is $y_{2}$,..., the length of the $(i+1)$-th $1$-run is $y_{i+1}$. The probability of the event $\left\{W_{r,k}^{(\rom{1})}=n,\ F_{n}=i\right\}$ is given by
\noindent
\begin{equation*}\label{eq: 1.1}
\begin{split}
&(\theta q^{0})^{y_{1}}(1-\theta q^{0})(\theta q^{1})^{y_{2}}(1-\theta q^{1})\cdots(\theta q^{i-1})^{y_{i}}(1-\theta q^{i-1})(\theta q^{i})^{y_{i+1}+k}.
\end{split}
\end{equation*}
\noindent
Using simple exponentiation algebra arguments to simplify,
\noindent
\begin{equation*}\label{eq: 1.1}
\begin{split}
\theta^{n-i}q^{ik}{\prod_{{j}=1}^i}\left(1-\theta q^{j-1}\right)q^{y_{2}+2y_{3}+\cdots+i y_{i+1}}.
\end{split}
\end{equation*}
\noindent
However, $y_{j}$s are nonnegative integers such that ${y_{1}}+{y_{2}}+\cdots+{y_{i+1}}=n-k-i$ and
\noindent
\begin{equation*}\label{eq: 1.1}
\begin{split}
\left[\frac{y_{1}}{k}\right]+\left[\frac{y_{2}}{k}\right]+\cdots+\left[\frac{y_{i+1}}{k}\right]=r-1\\
\end{split}
\end{equation*}
\noindent
so that
\noindent
\begin{equation*}\label{eq: 1.1}
\begin{split}
&P_{q,\theta}\left(W_{r,k}^{(\rom{1})}=n,\ F_{n}=i\right)\\
&=\theta^{n-i}q^{ik}{\prod_{{j}=1}^i}\left(1-\theta q^{j-1}\right)  {\mathop{\sum...\sum}_{\substack{y_{1}+y_{2}+\cdots+y_{i+1}=n-k-i\\\left[\frac{y_{1}}{k}\right]+\left[\frac{y_{2}}{k}\right]+\cdots+\left[\frac{y_{i+1}}{k}\right]=r-1\\\\y_{1}\geq 0,\ldots,y_{i+1}\geq 0}}}   q^{y_{2}+2y_{3}+\cdots+i y_{i+1}}.\\
\end{split}
\end{equation*}
\noindent
Summing with respect to $i=1, ..., n-rk$, we get\\
\noindent
\begin{equation*}\label{eq: 1.1}
\begin{split}
{\sum_{i=1}^{n-rk}} \theta^{n-i}q^{ik}{\prod_{{j}=1}^i}\left(1-\theta q^{j-1}\right)  {\mathop{\sum...\sum}_{\substack{y_{1}+y_{2}+\cdots+y_{i+1}=n-k-i\\\left[\frac{y_{1}}{k}\right]+\left[\frac{y_{2}}{k}\right]+\cdots+\left[\frac{y_{i+1}}{k}\right]=r-1\\\\y_{1}\geq 0,\ldots,y_{i+1}\geq 0}}}   q^{y_{2}+2y_{3}+\cdots+i y_{i+1}}\\
\end{split}
\end{equation*}
\noindent
From lemma \ref{aq_fun}, we can rewrite the above as follows:
\noindent
\begin{equation*}\label{eq: 1.1}
\begin{split}
{\sum_{i=1}^{n-rk}} \theta^{n-i}q^{ik}{\prod_{{j}=1}^i}\left(1-\theta q^{j-1}\right)  A_{q}^{k}(i,n-k-i,r-1).
\end{split}
\end{equation*}
\noindent
Thus, the proof is completed.
\end{proof}
\noindent
For $q=1$, from Theorem \ref{pmftype1q-negative}, the PMF of the Type \rom{1} negative binomial distribution of order $k$ in Bernoulli trials with success probability $\theta$ is obtained as follows:
\noindent
\begin{corollary}
\label{cor:type1negativ}
The PMF $w^{(\rom{1})}(n;r,k;\theta)=P_{\theta}\left(W_{r,k}^{(\rom{1})}=n\right)$ is given by
\begin{equation}
\label{type1negativ}
\begin{split}
P_{\theta}\left(W_{r,k}^{(\rom{1})}=n\right)=
\left\{
  \begin{array}{ll}
    {\sum_{i=1}^{n-rk}} \theta^{n-i}\left(1-\theta \right)^{i}  A_{1}^{k}(i,n-k-i,r-1)  & \text{if}\ n\geq rk+1, \\
    \theta^{kr} & \text{if $n=rk$,} \\
    0, & \text{if n $<$ rk.}\\
  \end{array}
\right.
\end{split}
\end{equation}
\end{corollary}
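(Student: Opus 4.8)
The plan is to obtain Corollary \ref{cor:type1negativ} as the termwise $q\to 1$ limit of the probability mass function already established in Theorem \ref{pmftype1q-negative}. This is the natural route because, as recorded after \eqref{bernoullifailureprob}, the $q$-geometric stochastic model \eqref{failureprobofq} degenerates to the IID Bernoulli model \eqref{bernoullifailureprob} when $q\to 1$; consequently $P_{q,\theta}\left(W_{r,k}^{(\rom{1})}=n\right)\to P_{\theta}\left(W_{r,k}^{(\rom{1})}=n\right)$, and it remains only to evaluate the right-hand side of the Theorem in this limit. First I would separate the three regimes of the piecewise formula and treat the nontrivial one, $n\ge rk+1$, where the summand is $\theta^{n-i}q^{ik}\prod_{j=1}^{i}(1-\theta q^{j-1})\,A_{q}^{k}(i,n-k-i,r-1)$.

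Since the outer sum ranges over the finite index set $i=1,\ldots,n-rk$, the limit passes through the summation, and it suffices to let $q\to1$ in each factor of a single summand. Here $q^{ik}\to 1$, the product reduces as $\prod_{j=1}^{i}(1-\theta q^{j-1})\to(1-\theta)^{i}$, and $A_{q}^{k}(i,n-k-i,r-1)\to A_{1}^{k}(i,n-k-i,r-1)$. Multiplying these limits and resumming over $i$ produces exactly $\sum_{i=1}^{n-rk}\theta^{n-i}(1-\theta)^{i}A_{1}^{k}(i,n-k-i,r-1)$, the asserted expression. The two boundary regimes need no work: the value $\theta^{rk}$ at $n=rk$ contains no $q$ and is carried over verbatim, while the vanishing for $n<rk$ is preserved.

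The single step meriting a line of justification is $A_{q}^{k}\to A_{1}^{k}$, which I would read straight off the Definition. There $A_{q}^{k}(r,s,t)$ is a \emph{finite} sum of monomials $q^{y_{2}+2y_{3}+\cdots+(r-1)y_{r}}$ indexed by the integer solutions $(y_{1},\ldots,y_{r})$ of a system of linear and floor-function constraints that do not involve $q$. As $q\to1$ every monomial tends to $1$, so the sum tends to the number of admissible solutions, which is precisely $A_{1}^{k}(r,s,t)$ as already identified in the Remark. I do not anticipate any genuine obstacle: the entire argument is the interchange of a limit with a finite sum, and the only potentially delicate point---passing the limit inside $A_{q}^{k}$---is disposed of by the same finiteness, since the solution set defining $A_{q}^{k}$ is independent of $q$.
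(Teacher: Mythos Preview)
Your proposal is correct and matches the paper's own derivation: the paper simply states ``For $q=1$, from the Theorem~\ref{pmftype1q-negative}\ldots'' and records the specialized formula, without further argument. Your writeup is in fact more careful than the paper's, since you spell out why the finite sums and the quantity $A_{q}^{k}$ pass to $A_{1}^{k}$; the only cosmetic difference is that the paper substitutes $q=1$ directly (the definition of $A_{q}^{k}$ explicitly allows $0<q\le 1$) rather than taking a limit, but since every expression involved is a polynomial in $q$ the two viewpoints coincide.
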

\noindent
Notably, Equation \eqref{type1negativ} in Corollary \ref{cor:type1negativ} is an alternative to the formula proposed by \cite{philippou1984negative} (See Definition 2.1) and computationally simpler.
\noindent

\begin{remark}
{\rm
A random variable related to $W_{r,k}^{(\rom{1})}$ is $N_{n,k}$ denotes the number of occurrences of a success run of length $k$ in the sequence of $n$ trials. Because the events $\left(N_{n,k}\geq r\right)$ and $\left(W_{r,k}^{(\rom{1})}\leq n\right)$ are equivalent, an alternative formula for the PMF of Type $\rom{1}$ $q$-negative binomial distribution of order $k$, can be easily obtained, using the dual relationship between the binomial and negative binomial distribution of order $k$. This alternative formula is given by
\noindent
\begin{equation*}
P_{q,\theta}\left(N_{n,k}\geq r\right)=P_{q,\theta}\left(W_{r,k}^{(\rom{1})}\leq n\right).
\end{equation*}
\noindent
Consequently, the PMF $w_{q}^{(\rom{1})}(n;k,r;\theta)=P_{q,\theta}(W_{r,k}^{(\rom{1})}=n)$ is implicitly determined by
\begin{equation}
\label{alpmf:type1q-nega}
w_{q}^{(\rom{1})}(n;k,r;\theta)=\sum_{x=0}^{r-1}f_{q}^{(\rom{1})}(x;n-1,k;\theta)-f_{q}^{(\rom{1})}(x;n,k;\theta)\ \text{for}\ n\geq rk\ \text{and}\  r\geq 1,
\end{equation}
where the probabilities $f_{q}^{(\rom{1})}(x;n-1,k;\theta)=P_{q,\theta}(N_{n-1,k}=x)$ and $f_{q}^{(\rom{1})}(x;n,k;\theta)=P_{q,\theta}(N_{n,k}=x)$ were obtained by \citet[Theorem~2]{yalcin2014q}:
\begin{equation}
\label{pmf:q-binomial}
f_{q}^{(\rom{1})}(x;n,k;\theta)={\sum_{i=0}^{n-xk}} \theta^{n-i}{\prod_{j=1}^i}(1-\theta q^{j-1})A_{q}^{k}(i+1,n-i,x)\ \text{for}\  x=0,1,\ldots,\left[\frac{n}{k}\right].
\end{equation}
Usually, the obtained expression \eqref{pmf:q-binomial} for $P_{q,\theta}\left(W_{r,k}^{(\rom{1})}=n\right)$ is computationally faster than that obtained using \eqref{pmftype1q-nega}.
}
\end{remark}



\section{Type $\rom{2}$ $q$-negative binomial distribution of order $k$}
\vspace{0.13in}
In this section, we examine the Type $\rom{2}$ $q$-negative binomial distribution of order $k$. Let us consider the waiting time for the $r$-th occurrence of a success run of length at least $k$. For $r\in N$ and $k\in N$, let $W_{r,k}^{(\rom{2})}$be the waiting time for the $r$-th appearance of a run of successes of length at least $k$. We use the at least counting scheme (Type \rom{2} enumeration scheme, by \citet{mood1940distribution}), i.e., a run of successes of length greater than or equal to $k$ preceded and succeeded by failure or by nothing. The support (range set) of $W_{r,k}^{(\rom{2})}$, $\mathfrak{R}\left(W_{r,k}^{(\rom{2})}\right)$ is given by
\noindent
\begin{equation*}
\begin{split}
\mathfrak{R}\left(W_{r,k}^{(\rom{2})}\right)=\{r(k+1)-1,r(k+1),\ldots\}.
\end{split}
\end{equation*}
\noindent
We now make a useful definition and lemma for the proofs of the Theorem in the sequel.
\noindent
\begin{definition}
For $0<q\leq1$, we define
\begin{equation*}\label{eq: 1.1}
\begin{split}
 B_{q}^{k}(r,s,t)=\sum_{\substack{y_{1},y_{2},\ldots,y_{r}}}   q^{y_{2}+2y_{3}+\cdots+(r-1) y_{r}}\\
\end{split}.
\end{equation*}
\noindent
where the summation is over all integers $y_1,\ldots,y_{r}$ satisfying
\noindent
\begin{equation*}\label{eq:1}
\begin{split}
y_{1}+y_{2}+\cdots+y_{r}=s,
\end{split}
\end{equation*}
\noindent
\begin{equation*}\label{eq:1}
\begin{split}
\sum_{i=1}^{r}I(y_{i}-k)=t,\ \text{and}
\end{split}
\end{equation*}
\noindent
\begin{equation*}\label{eq:2}
\begin{split}
y_{j}\geq 0,\quad j=1,\ldots,r,
\end{split}
\end{equation*}
where $I(j)$ stands for the indicator function such that $I(j)=1$, if $j\geq 0$; $0$, otherwise.
\end{definition}
\noindent
The following gives a recurrence relation useful for the computation of $B_{q}^{k}(r,s,t)$.\\
\noindent
\begin{lemma}
\label{bq_fun}
{\rm[\citet{makri2016runs}]} For $0<q\leq1$, $B_{q}^{k}(r,s,t)$ obeys the following recurrence relation,
\noindent
\begin{equation*}\label{eq: 1.1}
\begin{split}
B_{q}^{k}&(r,s,t)\\
=&\left\{
  \begin{array}{ll}
    \sum_{j=0}^{s-(t-1)k} q^{(r-1)j} B_{q}^{k}(r-1,s-j,t-I(j-k)), & \text{if $r>1$, $s\geq tk$ and $t\leq r$} \\
    1, & \text{if $r=1$, $s\geq k$ and $t=1$} \\
    0, & \text{otherwise.}\\
  \end{array}
\right.
\end{split}
\end{equation*}
\end{lemma}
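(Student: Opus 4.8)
The plan is to prove the recurrence by conditioning on the value of the last coordinate $y_r$ and peeling off a single term of the weighted sum, in complete parallel with the argument behind Lemma \ref{aq_fun} for $A_q^k$. First I would dispose of the base cases. For $r=1$ the $q$-exponent $y_2+2y_3+\cdots+(r-1)y_r$ is an empty sum, so every admissible configuration contributes weight $1$; the only configuration is $y_1=s$, and it satisfies the run constraint $I(y_1-k)=t$ precisely when $s\geq k$ and $t=1$, giving the value $1$, and is infeasible otherwise, giving $0$. This matches the displayed base line. The infeasibility clauses hidden in ``otherwise'' are also immediate: $t$ coordinates of size at least $k$ force $s\geq tk$, and there are only $r$ coordinates in all, so $s<tk$ or $t>r$ admits no valid tuple and hence $B_q^k(r,s,t)=0$.

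For the inductive step $r>1$, I would fix $y_r=j$ and separate this coordinate's contribution. The exponent splits as $(y_2+2y_3+\cdots+(r-2)y_{r-1})+(r-1)j$, so fixing $y_r=j$ pulls out a factor $q^{(r-1)j}$, while the residual weighted sum ranges over $y_1,\dots,y_{r-1}\geq 0$ subject to $y_1+\cdots+y_{r-1}=s-j$ and $\sum_{i=1}^{r-1}I(y_i-k)=t-I(j-k)$. By the very definition of $B_q^k$, that residual sum equals $B_q^k\!\left(r-1,\,s-j,\,t-I(j-k)\right)$. Summing over the admissible $j$ then yields $\sum_{j} q^{(r-1)j}B_q^k(r-1,s-j,t-I(j-k))$, which is exactly the right-hand side once the range of $j$ is pinned down.

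The main point to get right is the summation range. The obvious constraints are $j\geq 0$ and $s-j\geq 0$, and I would argue the effective upper limit is $s-(t-1)k$. When $j\geq k$ we have $I(j-k)=1$, so the residual problem must place $t-1$ large coordinates within a budget $s-j$, which is feasible exactly when $s-j\geq(t-1)k$, i.e. $j\leq s-(t-1)k$; this is where the stated limit originates. When $j<k$ we have $I(j-k)=0$ and the residual needs all $t$ large coordinates within $s-j$, feasible only for $j\leq s-tk$; but for $s-tk<j<k$ the term $B_q^k(r-1,s-j,t)$ already vanishes by the infeasibility clause, so extending the sum up to $s-(t-1)k$ merely appends zero summands. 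Hence one clean upper limit $s-(t-1)k$ governs both regimes, and the recurrence follows.

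The most error-prone part will be this bookkeeping of the limits together with the verification that the ``otherwise $=0$'' entries of the definition silently annihilate the out-of-range summands, so that a single formula subsumes the $j<k$ and $j\geq k$ cases. A secondary subtlety worth flagging is the value at $t=0$: the definition gives $B_q^k(1,s,0)=1$ for $0\le s<k$, which is not written into the lemma's base line; I would either restrict to $t\geq 1$, as the Type~$\rom{2}$ waiting-time application requires, or record this value separately to keep the induction fully self-consistent.
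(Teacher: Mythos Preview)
Your argument is correct: conditioning on $y_r=j$, peeling off the factor $q^{(r-1)j}$, and recognizing the residual constrained sum as $B_q^k(r-1,s-j,t-I(j-k))$ is exactly the right mechanism, and your justification of the single upper limit $s-(t-1)k$ (together with the observation that the ``otherwise~$=0$'' clause absorbs the infeasible $j<k$ terms) is sound. The paper itself does not supply a proof of this lemma; it simply quotes the result from \citet{makri2016runs}, so there is nothing to compare your approach against beyond noting that your derivation is the standard last-coordinate decomposition one would expect.

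Your flag about the $r=1$, $t=0$ base case is well taken: by the definition one has $B_q^k(1,s,0)=1$ for $0\le s<k$, whereas the lemma as stated assigns this the value~$0$. This is a genuine omission in the displayed recurrence (inherited from the cited source), not an error in your reasoning; the Type~$\rom{2}$ application in Theorem~\ref{thmtype2q-nega} only invokes $B_q^k(i,\cdot,r-1)$ with $r\ge 1$, so $t\ge 0$, and in fact the $t=0$ values do enter when $r=1$. Recording the missing base value, as you suggest, is the clean fix.
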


\begin{remark}
{\rm
We observe that $B_1^{k}(r,s,t)$ is the number of integer solutions $(y_{1},\ldots,y_{r})$ of
\noindent

\begin{equation*}\label{eq:1}
\begin{split}
y_{1}+y_{2}+\cdots+y_{r}=s,
\end{split}
\end{equation*}
\noindent
\begin{equation*}\label{eq:1}
\begin{split}
\sum_{i=1}^{r}I(y_{i}-k)=t,\ \text{and}
\end{split}
\end{equation*}
\noindent
\begin{equation*}\label{eq:2}
\begin{split}
y_{j}\geq 0,\quad j=1,\ \ldots,\ r.
\end{split}
\end{equation*}
\noindent
Th5s 5nd5cates the total number of arrangements of $s$ balls in $r$ distinguishable cells such that each of exactly $t$ of them receives at least $k$ balls. This number is given by
\noindent
\begin{equation*}\label{eq: 1.1}
\begin{split}
B_{1}^{k}(r,s,t)={r \choose t}H_{r-t}(s-tk,r,k-1).
\end{split}
\end{equation*}
\noindent
where $H_{m}(\alpha,r,k)$ denotes the number of allocations of $\alpha$ indistinguishable balls into $r$ distinguishable cells, where each of the $m$ $(0\leq m\leq r)$ specified cells is occupied by at most $k$ balls. The number can be expressed as
\noindent
\begin{equation*}\label{eq: 1.1}
\begin{split}
H_{m}(\alpha,r,k)=\sum_{j=0}^{\left[\frac{\alpha}{k+1}\right]}(-1)^{j}{m \choose j}{\alpha-(k+1)j+r-1 \choose \alpha-(k+1)j}.
\end{split}
\end{equation*}
\noindent
(\citet{makri2007success}).
}
\end{remark}

\noindent
The probability function of the Type $\rom{2}$ $q$-negative binomial distribution of order $k$ is obtained from Theorem \ref{thmtype2q-nega}: Clearly,
\noindent
\begin{equation*}
P_{q,\theta}\left(W_{r,k}^{(\rom{2})}=n\right)=0\ \text{for}\ 0\leq n<r(k+1)-1
\end{equation*}
\noindent
and so we focus on determining the PMF for $n\geq r(k+1)-1$.
\noindent
\begin{theorem}
\label{thmtype2q-nega}
The PMF $w_{q}^{(\rom{2})}\left(n;r,k;\theta\right)=P_{q,\theta}\left(W_{r,k}^{(\rom{2})}=n\right)$ is given by\begin{equation*}\label{eq: 1.1}
\begin{split}
&P_{q,\theta}\left(W_{r,k}^{(\rom{2})}=n\right)=\\
&\left\{
  \begin{array}{ll}
    {\sum_{i=r-1}^{n-rk}} \theta^{n-i}q^{ik}{\prod_{{j}=1}^i}(1-\theta q^{j-1})  B_{q}^{k}(i,n-k-i,r-1)  & \text{if $n> r(k+1)-1$,} \\
    \theta^{r(k+1)-1} & \text{if $n= r(k+1)-1$,} \\
    0, & \text{if n $<$  $r(k+1)-1$.}\\
  \end{array}
\right.
\end{split}
\end{equation*}
\end{theorem}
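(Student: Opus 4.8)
The plan is to follow verbatim the combinatorial template of the proof of Theorem~\ref{pmftype1q-negative}, making only the two adjustments dictated by the ``at least'' scheme: the terminal run of $k$ successes must now be a \emph{fresh} maximal run, and the non-overlapping enumerator $A_{q}^{k}$ is replaced by its ``at least'' analogue $B_{q}^{k}$ of Lemma~\ref{bq_fun}. Throughout, $G_{n,k}$ denotes the number of runs of length at least $k$ counted in the first $n$ trials.

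First I would treat the boundary value $n=r(k+1)-1$. As $r(k+1)-1$ is the minimum of $\mathfrak{R}\left(W_{r,k}^{(\rom{2})}\right)$, the event is realised by a single arrangement, namely $r$ runs of exactly $k$ successes separated by $r-1$ single failures. Writing down its trial-by-trial probability, while tracking the geometric level (which advances by one after each failure), gives the boundary probability at once; this is precisely the $i=r-1$ term of the general sum, so the two pieces of the formula agree at the endpoint.

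For $n>r(k+1)-1$ I would start from the observation that a sequence lies in $\left\{W_{r,k}^{(\rom{2})}=n\right\}$ exactly when its $r$-th maximal success run of length $\geq k$ first attains length $k$ at trial $n$. Hence $X_{n-k+1}=\cdots=X_{n}=1$ and, unlike in the Type~\rom{1} argument, trial $n-k$ is \emph{forced} to be a failure: were $X_{n-k}=1$ the terminal run would already have reached length $k$ before trial $n$. This yields the decomposition
\[
\left\{W_{r,k}^{(\rom{2})}=n\right\}=\left\{G_{n-k,k}=r-1\ \wedge\ X_{n-k}=0\ \wedge\ X_{n-k+1}=\cdots=X_{n}=1\right\}.
\]
I would then partition by $F_{n}=i$ and use, exactly as in Theorem~\ref{pmftype1q-negative}, that conditioning on $F_{n-k}=i$ makes each of the last $k$ trials a success of probability $\theta q^{i}$.

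The one genuinely new point is the cell bookkeeping. Because trial $n-k$ must be a failure, the $i$ failures in the first $n-k$ trials leave only $i$ success cells $y_{1},\dots,y_{i}$ (the terminal run is detached from the last cell), in contrast to the $i+1$ cells of the non-overlapping computation. These satisfy $y_{1}+\cdots+y_{i}=n-k-i$, and the demand of exactly $r-1$ earlier runs of length at least $k$ reads $\sum_{m=1}^{i}I(y_{m}-k)=r-1$. The probability of a fixed arrangement simplifies, by the same power-of-$\theta$ and power-of-$q$ accounting as before, to
\[
\theta^{n-i}q^{ik}\prod_{j=1}^{i}\left(1-\theta q^{j-1}\right)q^{\,y_{2}+2y_{3}+\cdots+(i-1)y_{i}},
\]
and summing over the admissible $(y_{1},\dots,y_{i})$ returns exactly $B_{q}^{k}(i,n-k-i,r-1)$ by the definition of $B_{q}^{k}$ together with Lemma~\ref{bq_fun}. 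Summing over $i$ from $r-1$ (the fewest failures that can host $r-1$ separated runs) to $n-rk$ (the most failures leaving enough successes for $r$ runs of length $k$) gives the stated expression. I expect the main obstacle to be the clean justification of the forced failure at trial $n-k$ and the attendant drop from $i+1$ to $i$ cells; once that structural fact is in place, the remaining algebra is identical to the Type~\rom{1} case and the identification with $B_{q}^{k}$ is immediate.
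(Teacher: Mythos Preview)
Your proposal is correct and follows the paper's argument essentially verbatim: the paper uses the same decomposition $\{G_{n-k-1,k}=r-1,\ X_{n-k}=0,\ X_{n-k+1}=\cdots=X_{n}=1\}$ (equivalent to your $G_{n-k,k}$ version once $X_{n-k}=0$ is imposed), partitions by $F_{n}=i$, obtains $i$ rather than $i+1$ success cells because of the forced failure, and identifies the constrained inner sum as $B_{q}^{k}(i,n-k-i,r-1)$.

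One caution on the boundary case: your identification of the probability at $n=r(k+1)-1$ with the $i=r-1$ term of the general sum is the right computation, but that term equals $\theta^{rk}q^{k\binom{r}{2}}\prod_{j=1}^{r-1}(1-\theta q^{j-1})$, not the displayed $\theta^{r(k+1)-1}$; so your assertion that ``the two pieces of the formula agree at the endpoint'' is not actually true for $r>1$. The paper's one-line justification $(\theta q^{0})^{r(k+1)-1}$ makes the same slip (it tacitly treats the minimal sequence as failure-free, which is only so for $r=1$), so this is an error in the stated boundary value rather than in your method.
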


\begin{proof}
We first consider $w_{q}^{(\rom{2})}(r(k+1)-1;r,k;\theta)$. Clearly, $w_{q}^{(\rom{2})}(r(k+1)-1;r,k;\theta)=\big(\theta q^{0}\big)^{ r(k+1)-1}=\theta^{ r(k+1)-1}$. Hereinafter, $n > r(k+1)-1.$ From the definition of $W_{r,k}^{(\rom{2})}$ every sequence of $n$ binary trials belonging to event $W_{r,k}^{(\rom{2})}=n$ must end with $k$ successes preceded by a failure. The event $W_{r,k}^{(\rom{2})}=n$ can be expressed as$$\left\{W_{r,k}^{(\rom{2})}=n\right\}=\left\{G_{n-k-1,k}=r-1\ \wedge\ X_{n-k}=0\ \wedge\ X_{n-k+1}=\cdots=X_{n}=1\right\}.$$
\noindent
We partitioned the event $W_{r,k}^{(\rom{2})}=n$ into disjointed events given by $F_{n}=i,$ for $i=r-1,\ldots,n-rk.$ Adding the probabilities we have
\noindent
\begin{equation*}\label{eq:bn}
\begin{split}
P_{q,\theta}\left(W_{r,k}^{(\rom{2})}=n\right)=\sum_{i=r-1}^{n-rk}P_{q,\theta}\Big(G_{n-k-1,k}=r-1\ \wedge\ &X_{n-k}=0\ \wedge\  F_{n}=i\ \wedge\\
&X_{n-k+1}=\cdots=X_{n}=1\Big).\\
\end{split}
\end{equation*}
\noindent
If the number of zeroes in the first $n-k-1$ trials is equal to $i-1,$ that is, $F_{n-k-1}=i-1,$ then in each of the $(n-k+1)$ to $n$-th trials the probability of success is
\noindent
\begin{equation*}\label{eq: kk}
\begin{split}
p_{n-k+1}=\cdots=p_{n}=\theta q^{i}.
\end{split}
\end{equation*}
\noindent
This probability can be written as follows.
\noindent
\begin{equation*}\label{eq:bn1}
\begin{split}
P_{q,\theta}&\left(W_{r,k}^{(\rom{2})}=n\right)\\
=&\sum_{i=r-1}^{n-rk}P_{q,\theta}\Big(G_{n-k-1,k}=r-1\ \wedge\ F_{n-k-1}=i-1\Big)\\
&\quad\quad\quad \times P_{q,\theta}\Big(X_{n-k}=0\ \wedge\ X_{n-k+1}=\cdots=X_{n}=1\mid F_{n-k-1}=i-1\Big)\\
=&\sum_{i=r-1}^{n-rk}P_{q,\theta}\Big(G_{n-k-1,k}=r-1\ \wedge\ F_{n-k-1}=i-1\Big)\Big(1-\theta q^{i-1}\Big)\Big(\theta q^{i}\Big)^{k}.\\
\end{split}
\end{equation*}
\noindent
An element of the event $\left\{W_{r,k}^{(\rom{2})}=n,\ F_{n}=i\right\}$ is an ordered sequence that consists of $n-i$ successes and $i$ failures such that the length of the success run is a nonnegative integer, and $r$ nonoverlapping runs of success of length at least $k$ end with $k$ successes preceded by a failure. The number of these sequences can be derived as follows: First, we distribute the $i$ failures. Note that $i$ failures form $i+1$ cells. Next, we distribute the $n-i-k$ successes into $i$ distinguishable cells as follows.
\begin{equation*}\label{eq: 1.1}
\begin{split}
\overbrace{{\underbrace{1\ldots1}_{y_{1}}}0{\underbrace{1\ldots1}_{y_{2}}}0\ldots0{\underbrace{1\ldots1}_{y_{i-1}}}0{\underbrace{1\ldots1}_{y_{i}}}}^{n-k-1}\overbrace{{\boxed{{0{\underbrace{1\ldots1}_{k}}}}}}^{k+1}
\end{split}
\end{equation*}
with $i$ 0s and $n-i$ 1s, where the length of the first 1-run is $y_{1}$, the length of the second 1-run is $y_{2}$,...; the length of the $(i)$-th $1$-run is $y_{i}$. The probability of the event $\left\{W_{r,k}^{(\rom{2})}=n,\ F_{n}=i\right\}$ is given by
\noindent
\begin{equation*}\label{eq: 1.1}
\begin{split}
(\theta q^{0})^{y_{1}}(1-\theta q^{0})(\theta q^{1})^{y_{2}}(1-\theta q^{1})\cdots(\theta q^{i-1})^{y_{i}}(1-\theta q^{i-1})(\theta q^{i})^{k}.
\end{split}
\end{equation*}
\noindent
Using simple exponentiation algebra arguments to simplify, we get
\noindent
\begin{equation*}\label{eq: 1.1}
\begin{split}
\theta^{n-i}q^{ik}{\prod_{{j}=1}^i}\left(1-\theta q^{j-1}\right)q^{y_{2}+2y_{3}+\cdots+(i-1) y_{i}}.
\end{split}
\end{equation*}
\noindent
However, the $y_{j}$s are nonnegative integers such that ${y_{1}}+{y_{2}}+\cdots+{y_{i}}=n-k-i$ and
\begin{equation*}\label{eq: 1.1}
\begin{split}
I(y_1-k)+\cdots+I(y_i-k)=r-1
\end{split}
\end{equation*}
so that
\noindent
\begin{equation*}\label{eq: 1.1}
\begin{split}
&P_{q,\theta}\left(W_{r,k}^{(\rom{2})}=n,\ F_{n}=i\right)\\
&=\theta^{n-i}q^{ik}{\prod_{{j}=1}^i}\left(1-\theta q^{j-1}\right)  {\mathop{\sum...\sum}_{\substack{y_{1}+y_{2}+\cdots+y_{i}=n-k-i\\I(y_1-k)+\cdots+I(y_i-k)=r-1\\y_{1}\geq 0,\ldots,y_{i}\geq 0}}}   q^{y_{2}+2y_{3}+\cdots+(i-1) y_{i}}.\\
\end{split}
\end{equation*}
\noindent
Summing the above with respect to $i=r-1, ..., n-rk$ yields\\
\noindent
\begin{equation*}\label{eq: 1.1}
\begin{split}
{\sum_{i=r-1}^{n-rk}} \theta^{n-i}q^{ik}{\prod_{{j}=1}^i}\left(1-\theta q^{j-1}\right)  {\mathop{\sum...\sum}_{\substack{y_{1}+y_{2}+\cdots+y_{i}=n-k-i\\I(y_1-k)+\cdots+I(y_j-k)=r-1\\y_{1}\geq 0,\ldots,y_{i}\geq 0}}}   q^{y_{2}+2y_{3}+\cdots+(i-1) y_{i}}\\
\end{split}
\end{equation*}
From lemma \ref{bq_fun}, we can rewrite the above relation as follows:
\begin{equation*}\label{eq: 1.1}
\begin{split}
{\sum_{i=r-1}^{n-rk}} \theta^{n-i}q^{ik}{\prod_{{j}=1}^i}\left(1-\theta q^{j-1}\right)  B_{q}^{k}(i,n-k-i,r-1).
\end{split}
\end{equation*}
\noindent
Thus, the proof is completed.
\end{proof}
\noindent
For $q=1$, from Theorem \ref{thmtype2q-nega}, the PMF of the Type \rom{2} negative binomial distribution of order $k$ in Bernoulli trials with success probability $\theta$ is obtained as follows:
\begin{corollary}
\label{cor:type1negativ}
The PMF $w^{(\rom{2})}(n;r,k;\theta)=P_{\theta}\left(W_{r,k}^{(\rom{2})}=n\right)$ is given by
\begin{equation*}\label{type2negativ}
\begin{split}
&P_{\theta}\left(W_{r,k}^{(\rom{2})}=n\right)=\\
&\left\{
  \begin{array}{ll}
    {\sum_{i=r-1}^{n-rk}} \theta^{n-i}(1-\theta)^{i}  B_{1}^{k}(i,n-k-i,r-1)  & \text{if $n> r(k+1)-1$,} \\
    \theta^{r(k+1)-1} & \text{if $n=r(k+1)-1$,} \\
    0, & \text{if n $< r(k+1)-1.$}\\
  \end{array}
\right.
\end{split}
\end{equation*}
\end{corollary}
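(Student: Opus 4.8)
The plan is to obtain this corollary as the $q=1$ specialization of Theorem \ref{thmtype2q-nega}, rather than reproving it from scratch, since the IID model \eqref{bernoullifailureprob} is precisely the $q\to 1$ limit of the $q$-geometric model \eqref{failureprobofq}. First I would substitute $q=1$ into the three-case formula of that theorem and simplify each $q$-dependent factor of the generic summand separately. The power $q^{ik}$ collapses to $1$; each factor of the product $\prod_{j=1}^{i}(1-\theta q^{j-1})$ becomes $1-\theta$, so the whole product reduces to $(1-\theta)^{i}$; and the weighted count $B_{q}^{k}(i,n-k-i,r-1)$ degenerates to the unweighted count $B_{1}^{k}(i,n-k-i,r-1)$, because at $q=1$ every monomial $q^{y_{2}+2y_{3}+\cdots+(i-1)y_{i}}$ in the defining sum equals $1$ and one is left simply enumerating the integer solutions. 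Thus the summand $\theta^{n-i}q^{ik}\prod_{j=1}^{i}(1-\theta q^{j-1})\,B_{q}^{k}(i,n-k-i,r-1)$ turns into $\theta^{n-i}(1-\theta)^{i}\,B_{1}^{k}(i,n-k-i,r-1)$, which is exactly the stated summand.

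For the remaining cases I would observe that the boundary value $n=r(k+1)-1$ already reads $\theta^{r(k+1)-1}$ in Theorem \ref{thmtype2q-nega} with no residual $q$-dependence (the single success run carries weight $(\theta q^{0})^{r(k+1)-1}$), so it passes unchanged to the IID setting; likewise the vanishing region $n<r(k+1)-1$ is untouched, and the summation range $i=r-1,\ldots,n-rk$ is a purely combinatorial constraint independent of $q$ and so carries over verbatim. Consequently the three branches of the IID formula are matched term by term with those of the $q$-formula evaluated at $q=1$.

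The only point that requires genuine care — and the natural place for a slip to hide — is the degeneration $B_{q}^{k}\to B_{1}^{k}$. I would justify it either directly from the definition (at $q=1$ every exponential weight is $1$, so the $q$-sum reduces to a plain count of the prescribed solutions) or by setting $q=1$ in the recurrence of Lemma \ref{bq_fun}, and then identify $B_{1}^{k}(i,n-k-i,r-1)$ with the closed form recorded in the Remark to confirm it is the correct IID enumeration of arrangements with exactly $r-1$ cells receiving at least $k$ balls. As a cross-check one could instead re-run the combinatorial partition of Theorem \ref{thmtype2q-nega} with the constant failure probability $1-\theta$, which produces the factors $\theta^{n-i}$ and $(1-\theta)^{i}$ directly; this should reproduce the same expression. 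Since no factor introduces a new $q$-artifact, the substitution is term-by-term and no further estimation is needed, completing the proof.
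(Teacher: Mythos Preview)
Your proposal is correct and matches the paper's approach exactly: the corollary is stated as the $q=1$ specialization of Theorem~\ref{thmtype2q-nega}, with $q^{ik}\to 1$, $\prod_{j=1}^{i}(1-\theta q^{j-1})\to(1-\theta)^{i}$, and $B_{q}^{k}\to B_{1}^{k}$. The paper gives no further argument beyond invoking the theorem at $q=1$, so your elaboration already exceeds what is required.
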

\noindent

\noindent
\begin{remark}
{\rm
$G_{n,k}$ is a random variable related to $W_{r,k}^{(\rom{2})}$. It denotes $G_{n,k}$ the number of occurrences of a success run of length at least $k$ in a sequence of $n$ trials. Because the events $\left(G_{n,k}\geq r\right)$ and $\left(W_{r,k}^{(\rom{2})}\leq n\right)$ are equivalent, an alternative formula for the PMF of Type $\rom{2}$ $q$-negative binomial distribution of order $k$, can be easily obtained using the dual relation between the binomial and negative binomial distribution of order $k$ is given by
\noindent
\begin{equation*}
P_{q,\theta}\left(G_{n,k}\geq r\right)=P_{q,\theta}\left(W_{r,k}^{(\rom{2})}\leq n\right).
\end{equation*}
\noindent
Consequently, the PMF $w_{q}^{(\rom{2})}(n;k,r;\theta)=P_{q,\theta}\Big(W_{r,k}^{(\rom{2})}=n\Big)$ is implicitly determined by
\begin{equation}
\label{alpmf:type2q-nega}
w_{q}^{(\rom{2})}(n;k,r;\theta)=\sum_{x=0}^{r-1}f_{q}^{(\rom{2})}(x;n-1,k;\theta)-f_{q}^{(\rom{2})}(x;n,k;\theta)\ \text{for}\ n\geq r(k+1)-1\ \text{and}\ r\geq 1,
\end{equation}
where the probabilities $f_{q}^{(\rom{2})}(x;n-1,k;\theta)=P_{q,\theta}(G_{n-1,k}=x)$ and $f_{q}^{(\rom{2})}(x;n,k;\theta)=P_{q,\theta}(G_{n,k}=x)$ are obtained f6336w5ng \citet{makri2016runs}.
\begin{equation}
\label{pmf:type2q-binomial}
f_{q}^{(\rom{2})}(x;n,k;\theta)={\sum_{i=0}^{n-xk}} \theta^{n-i}{\prod_{j=1}^i}(1-\theta q^{j-1})B_{q}^{k}(i+1,n-i,x)\ \text{for}\ x=0,1,\ldots,\left[\frac{n+1}{k+1}\right].
\end{equation}
Usually, the obtained expression \eqref{pmf:type2q-binomial} for $P_{q,\theta}\Big(W_{r,k}^{(\rom{2})}=n\Big)$ is computationally faster than that obtained using \eqref{alpmf:type2q-nega}.
}
\end{remark}


\section{Type $\rom{3}$ $q$-negative binomial distribution of order $k$}
In this section, we study the Type $\rom{3}$ $q$-negative binomial distribution of order $k$. Let us consider the waiting time for the $r$-th occurrence of the overlapping success run of length $k$. For $r\in N$ and $k\in N$, let $W_{r,k}^{(\rom{3})}$be the waiting time for the $r$-th appearance of the overlapping run of success of length $k$. We use the overlapping counting scheme (Type \rom{3} enumeration scheme proposed by \citet{ling1988binomial}); i.e., an uninterrupted sequence of $m\geq k$ successes preceded and followed by a failure or by the beginning or end of the sequence. The support (range set) of $W_{r,k}^{(\rom{3})}$, $\mathfrak{R}\Big(W_{r,k}^{(\rom{3})}\Big)$ is given by
\noindent
\begin{equation*}
\begin{split}
\mathfrak{R}\left(W_{r,k}^{(\rom{3})}\right)=\{k+r-1,k+r,\ldots\}.
\end{split}
\end{equation*}
\noindent
We list a useful definition and lemma for the proofs of Theorem in the sequel.
\noindent
\begin{definition}
For $0<q\leq1$, we define
\begin{equation*}\label{eq: 1.1}
\begin{split}
 C_{q}^{k}(r,s,t)=\sum_{\substack{y_{1},y_{2},\ldots,y_{r}}}   q^{y_{2}+2y_{3}+\cdots+(r-1) y_{r}}\\
\end{split}.
\end{equation*}
\noindent
Here, the summation is taken over all integers $y_1,\ldots,y_{r}$ satisfying
\noindent
\begin{equation*}\label{eq:1}
\begin{split}
y_{1}+y_{2}+\cdots+y_{r}=s,
\end{split}
\end{equation*}
\noindent
\begin{equation*}\label{eq:1}
\begin{split}
C(y_{1})+\cdots+C(y_{r})=t,\ \text{and}
\end{split}
\end{equation*}
\noindent
\begin{equation*}\label{eq:2}
\begin{split}
y_{j}\geq 0,\quad j=1,\ \ldots,\ r.
\end{split}
\end{equation*}
\noindent
Further,
\begin{equation*}\label{eq:2}
\begin{split}
C(j)=\left\{
  \begin{array}{ll}
    j-k+1, & \text{if}\ j\geq k,\\
    0, & \text{otherwise}
  \end{array}
\right.
\end{split}
\end{equation*}
\end{definition}
\noindent
Lemma \ref{cq_fun} gives a recurrence relation useful for the computation of $C_{q}^{k}(r,s,t)$.\\
\noindent
\begin{lemma}
\label{cq_fun}
{\rm [\citet{yalcin2013generalization}]} For $0<q\leq1$, $C_{q}^{k}(r,s,t)$ obeys the following recurrence relation,
\noindent
\begin{equation*}\label{eq: 1.1}
\begin{split}
&C_{q}(r,s,t)=\\&\left\{\begin{array}{ll}
\sum_{j=0}^{k-1} q^{(r-1)j} C_{q}(r-1,s-j,t)+\\
\sum_{j=k}^{s} q^{(r-1)j} C_{q}(r-1,s-j,t-j+k-1), &\text{if $r>1$, $s\geq 0$ and $t\geq 0$} \\
1,& \text{if $r=1$, $s\geq k$ and $t=s-k+1$} \\
&\text{or $r=1$, $0\leq s< k$ and $t=0$}, \\
0,&\text{otherwise.}\\
\end{array}
\right.
\end{split}
\end{equation*}
\end{lemma}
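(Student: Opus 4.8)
The plan is to mirror the peeling argument that establishes the recurrences for $A_q^k$ (Lemma \ref{aq_fun}) and $B_q^k$ (Lemma \ref{bq_fun}), namely to isolate the last summation variable $y_r$ in the defining sum for $C_q^k(r,s,t)$ and split the resulting single sum according to the definition of $C(\cdot)$.

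First I would dispose of the base case $r=1$. Here the exponent $y_2+2y_3+\cdots+(r-1)y_r$ is empty, so every admissible tuple contributes $q^0=1$, and the constraints collapse to $y_1=s$ together with $C(s)=t$. By the definition of $C(\cdot)$ this forces $t=s-k+1$ when $s\geq k$ and $t=0$ when $0\leq s<k$; in each of these two cases there is a unique solution, giving $C_q^k(1,s,t)=1$, while in every other case the solution set is empty and the value is $0$. This reproduces the two nonzero lines and the terminal zero line of the stated recurrence.

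For the inductive step $r>1$, I would fix $y_r=j$ with $0\leq j\leq s$ and factor the weight as $y_2+2y_3+\cdots+(r-1)y_r=(r-1)j+\bigl(y_2+2y_3+\cdots+(r-2)y_{r-1}\bigr)$. The decisive observation is that the bracketed remainder is exactly the exponent appearing in the definition of $C_q^k(r-1,\cdot,\cdot)$ in the variables $y_1,\ldots,y_{r-1}$. After pulling out $q^{(r-1)j}$, the inner sum ranges over all nonnegative $y_1,\ldots,y_{r-1}$ with $y_1+\cdots+y_{r-1}=s-j$ and $C(y_1)+\cdots+C(y_{r-1})=t-C(j)$, which by definition equals $C_q^k\bigl(r-1,s-j,t-C(j)\bigr)$. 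Summing over $j$ yields
\begin{equation*}
C_q^k(r,s,t)=\sum_{j=0}^{s} q^{(r-1)j}\,C_q^k\bigl(r-1,s-j,t-C(j)\bigr).
\end{equation*}

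The last step is to resolve $C(j)$ by splitting the range of $j$: for $0\leq j\leq k-1$ one has $C(j)=0$, so the third argument stays $t$, whereas for $k\leq j\leq s$ one has $C(j)=j-k+1$, so it becomes $t-j+k-1$. Separating the single sum accordingly reproduces verbatim the two-sum expression of the lemma. I do not expect a genuine obstacle; the only point deserving care is checking that peeling off the \emph{last} variable rather than the first is what makes the residual weight coincide with the $(r-1)$-variable exponent, since $y_r$ carries the maximal coefficient $r-1$ and its removal shifts each remaining coefficient down by exactly one.
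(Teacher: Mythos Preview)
Your argument is correct and is the natural last-variable peeling proof. The paper itself does not supply a proof of this lemma; it is quoted from \citet{yalcin2013generalization}, where the same recursion on $y_r$ is used, so your approach coincides with the intended one.

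One small imprecision in your closing remark: removing $y_r$ does \emph{not} shift the remaining coefficients. The variables $y_1,\ldots,y_{r-1}$ already carry coefficients $0,1,\ldots,r-2$ in the exponent, which is exactly the weight pattern defining $C_q^k(r-1,\cdot,\cdot)$. The point you are after is the contrapositive: had you peeled off $y_1$ instead, the surviving variables $y_2,\ldots,y_r$ would retain coefficients $1,\ldots,r-1$, which do not match the $(r-1)$-variable definition without an additional reindexing and an extra global $q$-power. That asymmetry is why isolating the last variable is the clean choice.
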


\begin{remark}
{\rm
We observe that $C_1(r,s,t)$ is the number of integer solutions $(y_{1},\ldots,y_{r})$ of
\noindent
\begin{equation*}\label{eq:1}
\begin{split}
y_{1}+y_{2}+\cdots+y_{r}=s,
\end{split}
\end{equation*}
\noindent
\begin{equation*}\label{eq:1}
\begin{split}
C(y_{1})+\cdots+C(y_{r})=t,\ \text{and}
\end{split}
\end{equation*}
\noindent
\begin{equation*}\label{eq:2}
\begin{split}
y_{j}\geq 0,\quad j=1,\ldots,r,
\end{split}
\end{equation*}
\noindent
where
\begin{equation*}\label{eq:2}
\begin{split}
C(j)=\left\{
  \begin{array}{ll}
    j-k+1, & \text{if}\ j\geq k,\\
    0, & \text{otherwise}
  \end{array}
\right.
\end{split}
\end{equation*}
\noindent
This means that the total number of arrangements of $s$ balls in $r$ distinguishable cells, yielding $t$ overlapping runs of balls of length $k$ is given by
\noindent
\begin{equation*}\label{eq: 1.1}
\begin{split}
C_{1}(r,s,t)=\sum_{a=1}^{\text{min}(r,t)}{r \choose a}{t-1 \choose a-1}S(r-a,k+1,s+r-t-ak),
\end{split}
\end{equation*}
\noindent
where
\noindent
\begin{equation*}\label{eq: 1.1}
\begin{split}
S(a,\ b,\ c)=\sum_{j=0}^{a}(-1)^{j}{a \choose j}{c-j(b-1)-1 \choose a-1}
\end{split}
\end{equation*}
\noindent
(\citet{charalambides2002enumerative}).
}
\end{remark}


\noindent
The probability function of the Type $\rom{3}$ $q$-negative binomial distribution of order $k$ is obtained from Theorem \ref{thmtype3q-nega}, and clearly,
\noindent
\begin{equation*}
P_{q,\theta}\left(W_{r,k}^{(\rom{3})}=n\right)=0\ \text{for}\ 0\leq n<k+r-1
\end{equation*}
\noindent
Hence, we focus on determining the PMF for $n\geq k+r-1$.
\noindent
\begin{theorem}
\label{thmtype3q-nega}
The PMF $w_{q}^{(\rom{3})}(n;r,k;\theta)=P_{q,\theta}\left(W_{r,k}^{(\rom{3})}=n\right)$ is given by
\begin{equation*}\label{eq: 1.1}
\begin{split}
&P_{q,\theta}\left(W_{r,k}^{(\rom{3})}=n\right)=\\
&\left\{
  \begin{array}{ll}
    \sum_{t=k}^{k+r-1}\sum_{i=\left[\frac{x-k-r}{k}\right]+1}^{n-k-r+1} \theta^{n-i}q^{it}{\prod_{{j}=1}^i}(1-\theta q^{j-1}) C_{q}(i,n-t-i,r-C(t))  & \text{if $n> k+r-1$,} \\
    \theta^{k+r-1} & \text{if $n=k+r-1$,} \\
    0, & \text{if}\ n < k+r-1.\\
  \end{array}
\right.
\end{split}
\end{equation*}
\end{theorem}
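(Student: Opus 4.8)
The plan is to reuse the combinatorial template of Theorems~\ref{pmftype1q-negative} and~\ref{thmtype2q-nega}, with one essential new feature forced by the overlapping scheme. Since an uninterrupted block of $m\geq k$ successes accounts for $C(m)=m-k+1$ overlapping runs, the $r$-th run need not occur at the start of the terminal success block; hence, rather than fixing the length of that block, I must sum over all its admissible lengths $t$, which produces the outer sum $\sum_{t=k}^{k+r-1}$ in the statement. The boundary value $n=k+r-1$ is immediate: as the maximum number of overlapping runs carried by $S$ successes is $S-k+1$, producing $r$ runs in only $k+r-1$ trials forces every trial to be a success, a single configuration lying in the first geometric sequence with probability $(\theta q^{0})^{k+r-1}=\theta^{k+r-1}$.

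For $n>k+r-1$, every sequence in $\{W_{r,k}^{(\rom{3})}=n\}$ satisfies $X_{n-k+1}=\cdots=X_{n}=1$; let $t$ be the length of the maximal terminal run of successes ending at trial $n$. Because $t\leq k+r-1<n$, this run is preceded by a genuine failure at position $n-t$, it supplies exactly $C(t)$ overlapping runs (the last of which, at position $n$, is the $r$-th overall), and so the first $n-t-1$ trials must carry precisely $r-C(t)$ overlapping runs. This gives the disjoint decomposition
\begin{equation*}
\{W_{r,k}^{(\rom{3})}=n\}=\bigcup_{t=k}^{k+r-1}\Big\{M_{n-t-1,k}=r-C(t)\ \wedge\ X_{n-t}=0\ \wedge\ X_{n-t+1}=\cdots=X_{n}=1\Big\},
\end{equation*}
with $t$ restricted to $k,\ldots,k+r-1$ by the requirement $0\leq C(t)\leq r$. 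I would then refine each event by the total number of failures $F_{n}=i$: conditioning on $F_{n-t-1}=i-1$ makes the separating failure the $i$-th one (probability $1-\theta q^{i-1}$) and fixes the success probability of every trial in the terminal block at $\theta q^{i}$, contributing $(\theta q^{i})^{t}$.

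The remaining computation mirrors the earlier proofs. Letting the $i-1$ failures of the first part partition it into cells of lengths $y_{1},\ldots,y_{i}$ with $y_{1}+\cdots+y_{i}=n-t-i$ and $C(y_{1})+\cdots+C(y_{i})=r-C(t)$, the probability of a fixed arrangement simplifies by elementary exponent bookkeeping to
\begin{equation*}
\theta^{n-i}q^{it}\prod_{j=1}^{i}\left(1-\theta q^{j-1}\right)q^{\,y_{2}+2y_{3}+\cdots+(i-1)y_{i}}.
\end{equation*}
Summing the weight $q^{\,y_{2}+2y_{3}+\cdots+(i-1)y_{i}}$ over all admissible $(y_{1},\ldots,y_{i})$ reproduces, by the definition of $C_{q}$ and Lemma~\ref{cq_fun}, the factor $C_{q}(i,n-t-i,r-C(t))$; summing over $i$ and $t$ then yields the asserted formula. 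The index window $\big[\tfrac{n-k-r}{k}\big]+1\leq i\leq n-k-r+1$ is exactly the feasibility range: the upper limit is the global condition $n-i\geq k+r-1$ (enough successes for $r$ runs), while the lower limit is the dual condition that the successes be split into short enough cells not to exceed $r$ runs, namely $n-t-i\leq i(k-1)+(r-C(t))$, an inequality in which $t$ cancels to give $i\geq(n-k-r+1)/k$.

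I expect the crux to be the justification of the decomposition over $t$, and in particular the accounting of how many of the $r$ runs are furnished by the last block: one must check that for $n>k+r-1$ the terminal block is always isolated by a real failure, so the overlapping-run counts of the two parts add without interaction, and that the $r$-th run lands exactly at position $n$, i.e.\ that the terminal block contributes the runs numbered $r-C(t)+1,\ldots,r$. Once this bookkeeping is settled, the exponent algebra and the reduction to $C_{q}$ follow the Type~$\rom{1}$ and Type~$\rom{2}$ arguments verbatim.
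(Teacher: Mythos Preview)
Your proposal is correct and follows essentially the same approach as the paper's own proof: the same decomposition over the length $t$ of the terminal success block, the same partitioning by the number of failures $F_{n}=i$, and the same reduction to $C_{q}(i,n-t-i,r-C(t))$ via the exponent identity. The only cosmetic difference is that you write $M_{n-t-1,k}=r-C(t)$ where the paper writes $M_{n-t,k}=r-C(t)$, but these coincide because $X_{n-t}=0$; in fact your careful discussion of the admissible $i$-range is more explicit than what the paper records.
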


\begin{proof}
We first consider $w_{q}^{(\rom{3})}(k+r-1;r,k;\theta)$. Clearly, $w_{q}^{(\rom{3})}(k+r-1;r,k;\theta)=\big(\theta q^{0}\big)^{k+r-1}=\theta^{k+r-1}$. Hereinafter, $n > k+r-1.$ By the definition of $W_{r,k}^{(\rom{3})}$ every sequence of $n$ binary trials belonging to the event $W_{r,k}^{(\rom{3})}=n$ must end with $k$ successes, and the $r$-th overlapping success runs occur in the $n$-th trial. Let us consider $W_{r,k}^{(\rom{3})}=n$ end with $t$ successes. The event $W_{r,k}^{(\rom{3})}=n$ can be expressed as follows:
\begin{equation*}
\left\{W_{r,k}^{(\rom{3})}=n\right\}=\bigcup_{t=k}^{k+r-1}\left\{M_{n-t,k}=r-C(t)\ \wedge\ X_{n-t}=0\ \wedge\ X_{n-t+1}=\cdots=X_{n}=1\right\}.
\end{equation*}
\noindent
We partition the event $W_{r,k}^{(\rom{3})}=n$ into disjointed events given by $F_{n}=i,$ for $i=\left[\frac{x-k-r}{k}\right]+1,\ldots,n-(k+r-1).$ By adding the probabilities, we get
\noindent
\begin{equation*}\label{eq:bn}
\begin{split}
P_{q,\theta}\left(W_{r,k}^{(\rom{3})}=n\right)=\sum_{t=k}^{k+r-1}\sum_{i=\left[\frac{x-k-r}{k}\right]+1}^{n-(k+r-1)}P_{q,\theta}\Big(M_{n-t,k}&=r-C(t)\ \wedge\ X_{n-t}=0\ \wedge\\
&F_{n}=i\ \wedge\ X_{n-t+1}=\cdots=X_{n}=1\Big).\\
\end{split}
\end{equation*}
\noindent
If the number of zeroes in the first $n-t$ trials is equal to $i,$ that is, $F_{n-t}=i,$ then in each of the $(n-t+1)$ to $n$-th trials, the probability of success is
\noindent
\begin{equation*}\label{eq: kk}
\begin{split}
p_{n-t+1}=\cdots=p_{n}=\theta q^{i}.
\end{split}
\end{equation*}
\noindent
Which can be rewritten as follows.
\noindent
\begin{equation*}\label{eq:bn1}
\begin{split}
&P_{q,\theta}\left(W_{r,k}^{(\rom{3})}=n\right)\\
&=\sum_{t=k}^{k+r-1}\sum_{i=\left[\frac{x-k-r}{k}\right]+1}^{n-(k+r-1)}P_{q,\theta}\Big(M_{n-t,k}=r-C(t)\ \wedge\ X_{n-t}=0\ \wedge\ F_{n-t}=i\Big)\\
&\quad\quad\quad\quad\quad\quad\times P_{q,\theta}\Big(X_{n-t+1}=\cdots =X_{n}=1\mid F_{n-t}=i\Big)\\
&=\sum_{t=k}^{k+r-1}\sum_{i=\left[\frac{x-k-r}{k}\right]+1}^{n-(k+r-1)}P_{q,\theta}\Big(M_{n-t,k}=r-C(t)\ \wedge\ X_{n-t}=0\ \wedge\ F_{n-t}=i\Big)\Big(\theta q^{i}\Big)^{k}.\\
\end{split}
\end{equation*}
\noindent
An element of the event $\left\{W_{r,k}^{(\rom{3})}=n,\ F_{n}=i\right\}$is an ordered sequence consisting of $n-i$ successes and $i$ failures, such that the length of the success run is nonnegative integer, has $r$ overlapping runs of success of length $k$, and ends with $t$ ($t=k,\ldots,k+r-1$) successes. The number of these sequences can be derived as follows: First, we distribute the $i$ failures, and $i$ failures form $i+1$ cells. Next, we distribute the $n-i-t$ successes into $i$ distinguishable cells as follows.
\noindent
\begin{equation*}\label{eq: 1.1}
\begin{split}
{\underbrace{1\ldots1}_{y_{1}}}0{\underbrace{1\ldots1}_{y_{2}}}0\ldots0{\underbrace{1\ldots1}_{y_{i-1}}}0{\underbrace{1\ldots1}_{y_{i}}}0{\boxed{{{\underbrace{1\ldots1}_{t}}}}}
\end{split}
\end{equation*}
\noindent
with $i$ 0s and $n-i$ 1s, where the length of the first one-run is $y_{1}$; the length of the second one-run is $y_{2}$,...; and the length of the $(i)$-th one-run is $y_{i}$. The probability of the event $\left\{W_{r,k}^{(\rom{3})}=n,\ F_{n}=i\right\}$ is given by
\noindent
\begin{equation*}\label{eq: 1.1}
\begin{split}
(\theta q^{0})^{y_{1}}(1-\theta q^{0})(\theta q^{1})^{y_{2}}(1-\theta q^{1})\cdots(\theta q^{i-1})^{y_{i}}(1-\theta q^{i-1})(\theta q^{i})^{t}.
\end{split}
\end{equation*}
\noindent
Simple exponentiation algebra arguments are used to simplify the above probabi;lity.
\noindent
\begin{equation*}\label{eq: 1.1}
\begin{split}
&\theta^{y_1+\cdots+y_{i}+t}q^{it}{\prod_{{j}=1}^i}\left(1-\theta q^{j-1}\right)q^{y_{2}+2y_{3}+\cdots+(i-1) y_{i}}\\
&\theta^{n-i}q^{it}{\prod_{{j}=1}^i}\left(1-\theta q^{j-1}\right)q^{y_{2}+2y_{3}+\cdots+(i-1) y_{i}}.
\end{split}
\end{equation*}
\noindent
However, $y_{j}$ are nonnegative integers such that ${y_{1}}+{y_{2}}+\cdots+{y_{i}}=n-t-i$ and
\noindent
\begin{equation*}\label{eq: 1.1}
\begin{split}
C(y_{1})+\cdots+C(y_{i})+C(t)=r,
\end{split}
\end{equation*}
\noindent
so that
\noindent
\begin{equation*}\label{eq: 1.1}
\begin{split}
&P_{q,\theta}\left(W_{r,k}^{(\rom{3})}=n,\ F_{n}=i\right)\\
&=\sum_{t=k}^{k+r-1}\theta^{n-i}q^{ik}{\prod_{{j}=1}^i}\left(1-\theta q^{j-1}\right)  {\mathop{\sum...\sum}_{\substack{y_{1}+y_{2}+\cdots+y_{i}=n-t-i\\ C(y_{1})+\cdots+C(y_{i})+C(t)=r
\\y_{1}\geq 0,\ldots,y_{i}\geq 0}}}   q^{y_{2}+2y_{3}+\cdots+(i-1) y_{i}}.\\
\end{split}
\end{equation*}
\noindent
Summing the above with respect to $i=\left[\frac{x-k-r}{k}\right]+1, ..., n-(k+r-1)$, then\\
\noindent
\begin{equation*}\label{eq: 1.1}
\begin{split}
\sum_{t=k}^{k+r-1}\sum_{i=\left[\frac{x-k-r}{k}\right]+1}^{n-(k+r-1)}\theta^{n-i}q^{ik}{\prod_{{j}=1}^i}\left(1-\theta q^{j-1}\right) {\mathop{\sum...\sum}_{\substack{y_{1}+y_{2}+\cdots+y_{i}=n-t-i\\ C(y_{1})+\cdots+C(y_{i})+C(t)=r
\\y_{1}\geq 0,\ldots,y_{i}\geq 0}}}   q^{y_{2}+2y_{3}+\cdots+(i-1) y_{i}}\\
\end{split}
\end{equation*}
From lemma \ref{cq_fun}, we can rewrite the above as follows:
\begin{equation*}\label{eq: 1.1}
\begin{split}
\sum_{t=k}^{k+r-1}\sum_{i=\left[\frac{x-k-r}{k}\right]+1}^{n-(k+r-1)} \theta^{n-i}q^{it}{\prod_{{j}=1}^i}(1-\theta q^{j-1})  C_{q}(i,n-t-i,r-C(t)).
\end{split}
\end{equation*}
\noindent
Thus, the proof is completed.

%
%
%
%

\end{proof}
\noindent
For $q=1$, from Theorem \ref{thmtype3q-nega}, the PMF of the Type \rom{3} negative binomial distribution of order $k$ in Bernoulli trials with success probability $\theta$ is obtained as follows:
\noindent
\begin{corollary}
The PMF $w^{(\rom{3})}(n;r,k;\theta)=P_{\theta}\left(W_{r,k}^{(\rom{3})}=n\right)$ is given by
\begin{equation*}\label{eq: 1.1}
\begin{split}
P_{\theta}\Big(&W_{r,k}^{(\rom{3})}=n \Big)=\\
&\left\{
  \begin{array}{ll}
    \sum_{t=k}^{k+r-1}\sum_{i=\left[\frac{x-k-r}{k}\right]+1}^{n-(k+r-1)} \theta^{n-i}(1-\theta)^{i}  C_{1}(i,n-t-i,r-C(t))  & \text{if $n> k+r-1$,} \\
    \theta^{k+r-1} & \text{if $n=k+r-1$,} \\
    0, & \text{if}\ n < k+r-1.\\
  \end{array}
\right.
\end{split}
\end{equation*}

\end{corollary}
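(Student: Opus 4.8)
The plan is to obtain this corollary as the direct $q\to 1$ specialization of Theorem \ref{thmtype3q-nega}, using the fact (recorded in Section 2) that each $q$-analog collapses to its classical counterpart as $q\to 1$. First I would dispose of the two degenerate cases, which carry over verbatim. For $n<k+r-1$ the probability is $0$ because $k+r-1$ is the minimum of the support $\mathfrak{R}\left(W_{r,k}^{(\rom{3})}\right)$, a fact independent of $q$. For $n=k+r-1$ the theorem already yields $\theta^{k+r-1}$, which is free of $q$ (it arises as $\left(\theta q^{0}\right)^{k+r-1}$), so it is unchanged at $q=1$.

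For the main range $n>k+r-1$, I would pass to the limit $q\to 1$ termwise in the finite double sum
\begin{equation*}
\sum_{t=k}^{k+r-1}\sum_{i=\left[\frac{x-k-r}{k}\right]+1}^{n-k-r+1} \theta^{n-i}q^{it}\prod_{j=1}^i\left(1-\theta q^{j-1}\right) C_{q}(i,n-t-i,r-C(t)).
\end{equation*}
Since the index ranges for $t$ and $i$ do not involve $q$, the evaluation reduces to taking each factor at $q=1$: the prefactor satisfies $q^{it}\to 1$, and the product satisfies $\prod_{j=1}^i\left(1-\theta q^{j-1}\right)\to\prod_{j=1}^i(1-\theta)=(1-\theta)^i$. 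Because the sum is finite, the termwise limit is automatically the limit of the whole sum, so no interchange-of-limit issue arises.

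The one factor that deserves a word of justification is $C_{q}(i,n-t-i,r-C(t))$, and this is where I expect the only (very mild) obstacle to lie. By its defining sum, $C_{q}^{k}(r,s,t)=\sum q^{y_{2}+2y_{3}+\cdots+(r-1)y_{r}}$ taken over all integer tuples satisfying the three constraints in its definition; at $q=1$ every summand equals $1$, so $C_{1}^{k}(r,s,t)$ simply counts those tuples. This is exactly the classical enumeration identified in the Remark following Lemma \ref{cq_fun}, so $C_{q}(i,n-t-i,r-C(t))\to C_{1}(i,n-t-i,r-C(t))$; alternatively one may check that setting all powers of $q$ to $1$ in the recurrence of Lemma \ref{cq_fun} produces the recurrence determining $C_{1}$. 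Combining the three limits gives the asserted expression $\sum_{t=k}^{k+r-1}\sum_{i}\theta^{n-i}(1-\theta)^{i}\,C_{1}(i,n-t-i,r-C(t))$, completing the argument. The only genuine point of care is the bookkeeping confirming that none of the summation bounds depend on $q$, so that the limiting sum retains the same shape as the $q$-deformed one.
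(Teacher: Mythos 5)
Your proposal is correct and matches the paper's own route exactly: the paper obtains this corollary simply by setting $q=1$ in Theorem \ref{thmtype3q-nega}, which is precisely your termwise specialization (with $q^{it}\to 1$, $\prod_{j=1}^{i}(1-\theta q^{j-1})\to(1-\theta)^{i}$, and $C_{q}\to C_{1}$ as the counting polynomial at $q=1$). Your added care about the $q$-independence of the summation bounds and the finiteness of the sum is harmless extra justification of the same argument.
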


\begin{remark}
{\rm
$M_{n,k}$ is a random variable related to $W_{r,k}^{(\rom{3})}$. It denotes $M_{n,k}$ the number of occurrences of the overlapping success run of length $k$ in the sequence of $n$ trials. Because the events $\left(M_{n,k}\geq r\right)$ and $\left(W_{r,k}^{(\rom{3})}\leq n\right)$ are equivalent, an alternative formula for the PMF of Type $\rom{3}$ $q$-negative binomial distribution of order $k$, can be easily obtained, using the dual relation between the binomial and negative binomial distribution of order $k$ as follows:
\noindent
\begin{equation*}
P_{q,\theta}\left(M_{n,k}\geq r\right)=P_{q,\theta}\left(W_{r,k}^{(\rom{3})}\leq n\right).
\end{equation*}
\noindent
Consequently, the PMF $w_{q}^{(\rom{3})}(n;k,r;\theta)=P_{q,\theta}(W_{r,k}^{(\rom{3})}=n)$ is implicitly determined by
\begin{equation}
\label{alpmf:type3q-nega}
w_{q}^{(\rom{3})}(n;k,r;\theta)=\sum_{x=0}^{r-1}f_{q}^{(\rom{3})}(x;n-1,k;\theta)-f_{q}^{(\rom{3})}(x;n,k;\theta)\ \text{for}\ n\geq r(k+1)-1\ \text{and}\ r\geq 1,
\end{equation}
where the probabilities $f_{q}^{(\rom{3})}(x;n-1,k;\theta)=P_{q,\theta}(M_{n-1,k}=x)$ and $f_{q}^{(\rom{3})}(x;n,k;\theta)=P_{q,\theta}(M_{n,k}=x)$ are as obtained by \citet{yalcin2013generalization}:
\begin{equation}
\label{pmf:type3q-binomial}
f_{q}^{(\rom{3})}(x;n,k;\theta)={\sum_{i=0}^{n-k-x+1}} \theta^{n-i}{\prod_{j=1}^i}(1-\theta q^{j-1})C_{q}^{k}(i+1,n-i,x)\ \text{for}\ x=0,1,\ldots,\left[\frac{n+1}{k+1}\right].
\end{equation}
Usually, the obtained expression \eqref{pmf:type3q-binomial} for $P_{q,\theta}\left(W_{r,k}^{(\rom{3})}=n\right)$ is computationally faster than that obtained using \eqref{alpmf:type3q-nega}.
}
\end{remark}


\section{Type $\rom{4}$ $q$-negative binomial distribution of order $k$}


We examined the Type $\rom{4}$ $q$-negative binomial distribution of order $k$. First, we consider the waiting time for the $r$-th occurrence of the success run of length exactly $k$. For $r\in N$ and $k\in N$, let $W_{r,k}^{(\rom{4})}$be the waiting time for the $r$-th appearance of the run of successes of length exactly $k$. We use the length exactly $k$ counting scheme (Type \rom{4} enumeration scheme, as proposed by \citet{mood1940distribution}). In this scheme, a success run of length exactly $k$ is preceded and succeeded by failure or by nothing. The support (range set) of $W_{r,k}^{(\rom{4})}$, $\mathfrak{R}\left(W_{r,k}^{(\rom{4})}\right)$ is given by
\noindent
\begin{equation*}
\begin{split}
\mathfrak{R}\left(W_{r,k}^{(\rom{4})}\right)=\{r(k+1)-1,r(k+1),\ldots\}.
\end{split}
\end{equation*}
\noindent
We now present a useful definition and lemma for the proofs of Theorem \ref{thmtype4q-nega} in the sequel.
\noindent
\begin{definition}
For $0<q\leq1$, we define the polynomial \begin{equation*}\label{eq: 1.1} \begin{split}
 D_{q}^{k}(r,s,t)=&\sum_{y_{1},y_{2},\ldots,y_{r}}   q^{y_{2}+2y_{3}+\cdots+(r-1)y_{r}}
\end{split}.
\end{equation*}
\noindent
where the summation is preformed over all integers $y_1,\ldots,y_{r}$ that satisf
\noindent
\begin{equation*}\label{eq:1}
\begin{split}
y_{1}+y_{2}+\cdots+y_{r}=s,
\end{split}
\end{equation*}
\noindent
\begin{equation*}\label{eq:1}
\begin{split}
\delta_{k,y_{1}}+\cdots+\delta_{k,y_{r}}=t,\ \text{and}
\end{split}
\end{equation*}
\noindent
\begin{equation*}\label{eq:2}
\begin{split}
y_{j}\geq 0,\quad j=1,\ \ldots,\ r.
\end{split}
\end{equation*}
\noindent
wHere,
\begin{equation*}
  \delta_{i,j}=\left\{
               \begin{array}{ll}
                 1, & \text{if}\ i=j \\
                 0, & \text{if}\ i\neq j.
               \end{array}
                \right.
\end{equation*}
\end{definition}
\noindent
Now, we obtain a recurrence relation useful in the computation of $D_{q}^{k}(r,s,t)$.\\
\noindent
\begin{lemma}
\label{dq_fun}
{\rm [\cite{ohandjang2022}]}
For $0<q\leq1$, $D_{q}^{k}(r,s,t)$ obeys the following recurrence relation:
\noindent
\begin{equation*}\label{eq: 1.1}
\begin{split}
&D_{q}^{k}(r,s,t)\\
&=\left\{
  \begin{array}{ll}
    1, & \text{for}\ r=1,\ s=k,\ t=1 \\
    & \text{or}\ r=1,\ 0\leq s< k,\ t=0\\
        & \text{or}\ r=1,\ s> k,\ t=0\\
\sum_{j=0}^{k-1}q^{j(r-1)}D_{q}^{k}(r-1,s-j,t)+q^{k(r-1)}D_{q}^{k}(r-1,s-k,t-1)\\ +\sum_{j=k+1}^{s}q^{j(r-1)}D_{q}^{k}(r-1,s-j,t), & \text{for}\
r\geq 2,\ s\geq tk,\ t\leq r \\
    0, & \text{otherwise.}\\
  \end{array}
\right.
\end{split}
\end{equation*}
\end{lemma}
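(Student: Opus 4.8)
The plan is to prove the recurrence by conditioning on the value of the last occupancy variable $y_{r}$, exploiting the crucial structural feature that the exponent $y_{2}+2y_{3}+\cdots+(r-1)y_{r}$ is \emph{linear} in each $y_{j}$, with the coefficient of $y_{j}$ equal to $j-1$. Consequently the entire contribution of $y_{r}$ to the exponent is $(r-1)y_{r}$, which factors out of the sum as a power of $q$, while the residual exponent $y_{2}+2y_{3}+\cdots+(r-2)y_{r-1}$ is \emph{exactly} the exponent appearing in the definition of $D_{q}^{k}(r-1,\cdot,\cdot)$. This observation is what makes the peeling argument close.

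First I would dispose of the base case $r=1$. Here the defining sum involves the single variable $y_{1}=s$ with an empty exponent, so any admissible configuration contributes $q^{0}=1$. The constraint $\delta_{k,y_{1}}=t$ forces $t=1$ precisely when $y_{1}=k$, that is $s=k$, and forces $t=0$ precisely when $y_{1}\neq k$, that is $s\geq 0$ with $s\neq k$ (splitting into $0\leq s<k$ and $s>k$). For every other pair $(s,t)$ there is no admissible $y_{1}$, so $D_{q}^{k}(1,s,t)=0$. This reproduces the stated base cases verbatim.

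For $r\geq 2$ I would fix $y_{r}=j$ with $j=0,1,\ldots,s$, the upper limit coming from $y_{r}\leq y_{1}+\cdots+y_{r}=s$ since all $y_{i}\geq 0$. Peeling off the factor $q^{(r-1)j}$ and letting $y_{1},\ldots,y_{r-1}$ range over all nonnegative integers with $y_{1}+\cdots+y_{r-1}=s-j$ and $\delta_{k,y_{1}}+\cdots+\delta_{k,y_{r-1}}=t-\delta_{k,j}$, the inner sum of $q^{y_{2}+2y_{3}+\cdots+(r-2)y_{r-1}}$ is by definition $D_{q}^{k}(r-1,s-j,t-\delta_{k,j})$. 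This yields the compact identity
\begin{equation*}
D_{q}^{k}(r,s,t)=\sum_{j=0}^{s}q^{(r-1)j}\,D_{q}^{k}\bigl(r-1,\,s-j,\,t-\delta_{k,j}\bigr).
\end{equation*}

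Finally I would split this single sum according to the value of the Kronecker delta $\delta_{k,j}$: the indices $j=0,\ldots,k-1$ and $j=k+1,\ldots,s$ all satisfy $\delta_{k,j}=0$ and keep the third argument at $t$, whereas the unique index $j=k$ satisfies $\delta_{k,j}=1$ and lowers the third argument to $t-1$, contributing the isolated term $q^{k(r-1)}D_{q}^{k}(r-1,s-k,t-1)$. This is precisely the three-piece right-hand side of the lemma. The support conditions $s\geq tk$ and $t\leq r$ for a nonzero value (with $0$ otherwise) follow because at most $r$ of the $y_{j}$ can equal $k$, and any $t$ of them equal to $k$ already consume $tk$ of the total $s$; one checks these propagate consistently through the recursion into the $r=1$ base. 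I do not expect a genuine obstacle here, as the argument is a standard last-variable decomposition; the only point demanding care is the bookkeeping at the boundary of the $t$-index, ensuring $t-\delta_{k,j}\geq 0$ and that the recursion terminates in the correct base case rather than in a spurious nonzero contribution.
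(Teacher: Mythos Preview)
Your proof is correct. The paper does not supply its own proof of this lemma (it is quoted from an external reference, as are the parallel recurrences for $A_{q}^{k}$, $B_{q}^{k}$, $C_{q}^{k}$, and $E_{q}^{k,\ell}$), but your last-variable peeling argument is the standard and expected route and matches exactly what one would write: condition on $y_{r}=j$, factor out $q^{(r-1)j}$, recognize the residual as $D_{q}^{k}(r-1,s-j,t-\delta_{k,j})$, and split the sum at $j=k$.
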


\begin{remark}
{\rm
We observe that $D_1^{k}(r,s,t)$ is the number of integer solutions $(y_{1},\ldots,y_{r})$ of
\noindent
\begin{equation*}\label{eq:1}
\begin{split}
y_{1}+y_{2}+\cdots+y_{r}=s,
\end{split}
\end{equation*}
\noindent
\begin{equation*}\label{eq:1}
\begin{split}
\delta_{k,y_{1}}+\cdots+\delta_{k,y_{r}}=t,\ \text{and}
\end{split}
\end{equation*}
\noindent
\begin{equation*}\label{eq:2}
\begin{split}
y_{j}\geq 0,\quad j=1,\ \ldots,\ r.
\end{split}
\end{equation*}
\noindent
where
\begin{equation*}
  \delta_{i,j}=\left\{
               \begin{array}{ll}
                 1, & \text{if}\ i=j \\
                 0, & \text{if}\ i\neq j.
               \end{array}
                \right.
\end{equation*}
\noindent
This means that the number of allocations of $s$ balls into $r$ cells so that each of exactly $t$
of them receives exactly equal to $k$ balls is given by
\begin{equation*}
\label{eq: 1.1}
\begin{split} D_{1}^{k}(r,s,t)={r \choose
t}A(s-tk,\ r-t,\ k)
\end{split},
\end{equation*}
where  $A(\alpha,r,k)=\sum_{j=0}^{[\alpha/k]}(-1)^{j}{r \choose
j}{\alpha-(k+1)j+r-1 \choose \alpha-jk}$ (see \citet{makri2007success}).
\noindent
}

\end{remark}

%
%

\noindent
The probability function of the Type $\rom{4}$ $q$-negative binomial distribution of order $k$ is obtained from the following theorem \ref{thmtype4q-nega}, and clearly,
\noindent
\begin{equation*}
P_{q,\theta}\left(W_{r,k}^{(\rom{4})}=n\right)=0\ \text{for}\ 0\leq n<r(k+1)-1
\end{equation*}
\noindent
Hence, we focus on determining the PMF for $n\geq r(k+1)-1$.
\noindent

\begin{theorem}
\label{thmtype4q-nega}
The PMF $w_{q}^{(\rom{4})}(n;r,k;\theta)=P_{q,\theta}\left(W_{r,k}^{(\rom{4})}=n\right)$ is given by
\begin{equation*}\label{eq: 1.1}
\begin{split}
&P_{q,\theta}\left(W_{r,k}^{(\rom{4})}=n\right)=\\
&\left\{
  \begin{array}{ll}
    {\sum_{i=r-1}^{n-rk}} \theta^{n-i}q^{ik}{\prod_{{j}=1}^i}(1-\theta q^{j-1})D_{q}(i,n-k-i,r-1)  & \text{if $n> r(k+1)-1$} \\
    \theta^{r(k+1)-1} & \text{if $n=r(k+1)-1$,} \\
    0, & \text{if $n< r(k+1)-1$.}\\
  \end{array}
\right.
\end{split}
\end{equation*}
\end{theorem}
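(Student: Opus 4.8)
The plan is to follow the combinatorial template already used for Theorem~\ref{thmtype2q-nega}, since $W_{r,k}^{(\rom{4})}$ shares the support $\{r(k+1)-1,r(k+1),\ldots\}$ with $W_{r,k}^{(\rom{2})}$ and admits a parallel event decomposition; the only substantive change is that the enumerator $B_{q}^{k}$, which registers cells receiving \emph{at least} $k$ balls, is replaced by $D_{q}^{k}$, which registers cells receiving \emph{exactly} $k$ balls. First I would dispose of the boundary value: the shortest sequence realizing $\{W_{r,k}^{(\rom{4})}=r(k+1)-1\}$ consists of $r$ runs of exactly $k$ ones separated by $r-1$ isolated failures, and evaluating this single minimal arrangement in the $q$-geometric model yields the boundary term $w_{q}^{(\rom{4})}(r(k+1)-1;r,k;\theta)$.

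For $n>r(k+1)-1$ I would use that every sequence in the event must terminate in an isolated run of exactly $k$ successes, so that
\[
\left\{W_{r,k}^{(\rom{4})}=n\right\}=\left\{E_{n-k-1,k}=r-1\ \wedge\ X_{n-k}=0\ \wedge\ X_{n-k+1}=\cdots=X_{n}=1\right\},
\]
where $E_{n-k-1,k}$ counts exactly-$k$ runs in the first $n-k-1$ trials. I then partition this event by the total number of failures $F_{n}=i$; the admissible range is $i=r-1,\ldots,n-rk$, the lower bound coming from the $r-1$ failures needed to isolate $r$ exactly-$k$ runs and the upper bound from the $rk$ successes they contain. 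Conditioning on $F_{n-k-1}=i-1$ places the terminal block in the $(i+1)$-st geometric subsequence, so its probability is $(\theta q^{i})^{k}$, and the $i+1$ subsequences are independent.

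Next I would enumerate the sequences in $\{W_{r,k}^{(\rom{4})}=n,\ F_{n}=i\}$ by the run-length vector $(y_{1},\ldots,y_{i})$ arising when the $n-i-k$ non-terminal successes are distributed among the cells cut out by the first $i$ failures, exactly as in the diagram for Theorem~\ref{thmtype2q-nega} but with boxed tail $0\,\underbrace{1\cdots1}_{k}$. Each such arrangement carries probability
\[
\theta^{n-i}q^{ik}\prod_{j=1}^{i}\left(1-\theta q^{j-1}\right)q^{\,y_{2}+2y_{3}+\cdots+(i-1)y_{i}},
\]
the $q$-exponent arising because the successes in cell $j$ lie in the $j$-th geometric subsequence and contribute $(\theta q^{j-1})^{y_{j}}$. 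Summing over all $(y_{1},\ldots,y_{i})$ with $y_{1}+\cdots+y_{i}=n-k-i$ and $\delta_{k,y_{1}}+\cdots+\delta_{k,y_{i}}=r-1$ is, by the definition of $D_{q}^{k}$ together with Lemma~\ref{dq_fun}, precisely $D_{q}^{k}(i,n-k-i,r-1)$; summing over $i$ then produces the stated formula.

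The step demanding the most care is the translation of the ``length exactly $k$'' requirement into the constraint $\delta_{k,y_{1}}+\cdots+\delta_{k,y_{i}}=r-1$. Here I must argue that each nonempty cell $y_{j}$ is a \emph{maximal} success run, bounded on both sides by the failures defining the cell (or by the start of the sequence), so that $\delta_{k,y_{j}}=1$ detects precisely the exactly-$k$ runs while empty cells ($y_{j}=0$) are harmlessly ignored; I must also confirm that the terminal run is legitimately counted as the $r$-th exactly-$k$ run, being delimited by $X_{n-k}=0$ and by the end of the sequence. The crucial distinction from Theorem~\ref{thmtype2q-nega} is that a cell with $y_{j}>k$ must \emph{not} contribute, which is exactly what the Kronecker indicator $\delta_{k,y_{j}}$, as opposed to the threshold indicator $I(y_{j}-k)$, guarantees.
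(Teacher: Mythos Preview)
Your approach mirrors the paper's exactly: the same terminal-block decomposition $\{W_{r,k}^{(\rom{4})}=n\}=\{E_{n-k-1,k}=r-1,\ X_{n-k}=0,\ X_{n-k+1}=\cdots=X_n=1\}$, the same partition by $F_n=i$ over $r-1\le i\le n-rk$, the same run-length diagram and probability weight $\theta^{n-i}q^{ik}\prod_{j=1}^{i}(1-\theta q^{j-1})\,q^{y_2+2y_3+\cdots+(i-1)y_i}$, and the same identification of the constrained inner sum with $D_q^{k}(i,n-k-i,r-1)$ via Lemma~\ref{dq_fun}; your explicit justification of why the Kronecker constraint $\sum_j\delta_{k,y_j}=r-1$ (rather than the threshold indicator) is the right one is in fact more careful than the paper's.

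One caveat on the boundary step: the minimal arrangement you correctly describe for $n=r(k+1)-1$ contains $r-1$ failures, so its $q$-probability works out to $\theta^{rk}q^{kr(r-1)/2}\prod_{j=1}^{r-1}(1-\theta q^{j-1})$, which agrees with the sum formula at its single admissible term $i=r-1$ but \emph{not} with the stated boundary value $\theta^{r(k+1)-1}$ when $r>1$. The paper's proof glosses over this in the same way---writing $(\theta q^{0})^{r(k+1)-1}$ as though the minimal sequence were all successes---so the discrepancy lies in the theorem's boundary clause rather than in your method.
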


\begin{proof}
We first examine $w_{q}^{(\rom{4})}(r(k+1)-1;r,k;\theta)$. Clearly, $w_{q}^{(\rom{4})}(r(k+1)-1;r,k;\theta)=\big(\theta q^{0}\big)^{r(k+1)-1}=\theta^{r(k+1)-1}$. Hereinafter, $n > r(k+1)-1.$ By the definition of $W_{r,k}^{(\rom{4})}$, every sequence of $n$ binary trials belonging to the event $W_{r,k}^{(\rom{4})}=n$ must end with $k$ successes. The event $W_{r,k}^{(\rom{4})}=n$ can be expressed as $$\left\{W_{r,k}^{(\rom{4})}=n\right\}=\left\{E_{n-k-1,k}=r-1\ \wedge\ X_{n-k}=0\ \wedge\ X_{n-k+1}=\cdots=X_{n}=1\right\}.$$
\noindent
We partition the event $W_{r,k}^{(\rom{4})}=n$ into disjointed events given by $F_{n}=i,$ for $i=r-1,\ldots,n-rk.$ Adding the probabilitie, we get
\noindent
\begin{equation*}\label{eq:bn}
\begin{split}
P_{q,\theta}\Big(W_{r,k}^{(\rom{4})}=n\Big)=\sum_{i=r-1}^{n-rk}P_{q,\theta}\Big(E_{n-k,k}=r-1\ \wedge\ &X_{n-k}=0\ \wedge\ F_{n}=i\ \wedge\\
&X_{n-k+1}=\cdots=X_{n}=1\Big).\\
\end{split}
\end{equation*}
\noindent
If the number of zeroes in the first $n-k$ trials is equal to $i,$ that is, $F_{n-k}=i,$ then in each of the $(n-k+1)$ to $n$-th trials, the probability of success is
\noindent
\begin{equation*}\label{eq: kk}
\begin{split}
p_{n-k+1}=\cdots=p_{n}=\theta q^{i}.
\end{split}
\end{equation*}
\noindent
We can now rewrite this as follows:
\noindent
\begin{equation*}\label{eq:bn1}
\begin{split}
P_{q,\theta}\Big(W_{r,k}^{(\rom{4})}=n\Big)=\sum_{i=r-1}^{n-rk}&P_{q,\theta}\Big(E_{n-k,k}=r-1\ \wedge\ F_{n-k}=i\Big)\\
&\times P_{q,\theta}\Big(X_{n-k}=0\ \wedge\ X_{n-k+1}=\cdots =X_{n}=1\mid F_{n-k}=i\Big)\\
=\sum_{i=r-1}^{n-rk}&P_{q,\theta}\Big(E_{n-k,k}=r-1\ \wedge\ S_{n}=i\Big)\Big(1-\theta q^{i-1}\Big)\Big(\theta q^{i}\Big)^{k}.\\
\end{split}
\end{equation*}
\noindent
An element of the event $\left\{W_{r,k}^{(\rom{4})}=n,\ F_{n}=i\right\}$is an ordered sequence that consists of $n-i$ successes and $i$ failures such that the length of the success run is a nonnegative integer; $r$ nonoverlapping runs of success of length exactly $k$ end with exactly length $k$ successes. The number of these sequences can be derived as follows: First, we distribute the $i$ failures. Note that $i$ failures form $i+1$ cells. Next, we distribute the $n-i-k$ successes into $i$ distinguishable cells as follows:
\noindent
\begin{equation*}\label{eq: 1.1}
\begin{split}
\overbrace{{\underbrace{1\ldots1}_{y_{1}}}0{\underbrace{1\ldots1}_{y_{2}}}0\ldots0{\underbrace{1\ldots1}_{y_{i-1}}}0{\underbrace{1\ldots1}_{y_{i}}}}^{n-k-1}\overbrace{{\boxed{{0{\underbrace{1\ldots1}_{k}}}}}}^{k+1}
\end{split}
\end{equation*}
\noindent
with $i$ 0s and $n-i$ 1s. Here, the length of the first one-run is $y_{1}$, and the length of the second one-run is $y_{2}$,...; the length of the $(i)$-th one-run is $y_{i}$. The probability of the event $\left\{W_{r,k}^{(\rom{4})}=n,\ F_{n}=i\right\}$ is given by
\noindent
\begin{equation*}\label{eq: 1.1}
\begin{split}
(\theta q^{0})^{y_{1}}(1-\theta q^{0})(\theta q^{1})^{y_{2}}(1-\theta q^{1})\cdots(\theta q^{i-1})^{y_{i}}(1-\theta q^{i-1})(\theta q^{i})^{k}.
\end{split}
\end{equation*}
\noindent
Using simple exponentiation algebra arguments to simplify
\noindent
\begin{equation*}\label{eq: 1.1}
\begin{split}
\theta^{n-i}q^{ik}{\prod_{{j}=1}^i}(1-\theta q^{j-1})q^{y_{2}+2y_{3}+\cdots+(i-1) y_{i}}.
\end{split}
\end{equation*}
\noindent
However, $y_{j}$s are nonnegative integers such that ${y_{1}}+{y_{2}}+\cdots+{y_{i}}=n-k-i$ and
\noindent
\begin{equation*}\label{eq: 1.1}
\begin{split}
\delta_{k,y_{1}}+\delta_{k,y_{2}}+\cdots+\delta_{k,y_{i}}=r-1
\end{split}
\end{equation*}
\noindent
so that
\noindent
\begin{equation*}\label{eq: 1.1}
\begin{split}
&P_{q,\theta}\left(W_{r,k}^{(\rom{4})}=n,\ F_{n}=i\right)\\
&=\theta^{n-i}q^{ik}{\prod_{{j}=1}^i}\Big(1-\theta q^{j-1}\Big)  {\mathop{\sum...\sum}_{\substack{y_{1}+y_{2}+\cdots+y_{i}=n-k-i\\\delta_{k,y_{1}}+\delta_{k,y_{2}}+\cdots+\delta_{k,y_{i}}=r-1
\\y_{1}\geq 0,\ldots,y_{i}\geq 0}}}   q^{y_{2}+2y_{3}+\cdots+(i-1) y_{i}}.\\
\end{split}
\end{equation*}
\noindent
Summing the above with respect to $i=r-1, ..., n-rk$, then\\
\noindent
\begin{equation*}\label{eq: 1.1}
\begin{split}
{\sum_{i=r-1}^{n-rk}} \theta^{n-i}q^{ik}{\prod_{{j}=1}^i}\Big(1-\theta q^{j-1}\Big)  {\mathop{\sum...\sum}_{\substack{y_{1}+y_{2}+\cdots+y_{i}=n-k-i\\\delta_{k,y_{1}}+\delta_{k,y_{2}}+\cdots+\delta_{k,y_{i}}=r-1
\\y_{1}\geq 0,\ldots,y_{i}\geq 0}}}   q^{y_{2}+2y_{3}+\cdots+(i-1) y_{i}}\\
\end{split}
\end{equation*}
\noindent
From lemma \ref{dq_fun}, we can rewrite the above as follows:
\noindent
\begin{equation*}\label{eq: 1.1}
\begin{split}
{\sum_{i=r-1}^{n-rk}} \theta^{n-i}q^{ik}{\prod_{{j}=1}^i}\Big(1-\theta q^{j-1}\Big)  D_{q}(i,n-k-i,r-1).
\end{split}
\end{equation*}
\noindent
Thus, the proof is completed.

%
%
%
%

\end{proof}
\noindent
For $q=1$, from Theorem \ref{thmtype4q-nega}, the PMF of the Type \rom{4} negative binomial distribution of order $k$ in Bernoulli trials with success probability $\theta$ is obtained as follows:

\begin{corollary}
The PMF $w^{(\rom{4})}(n;r,k;\theta)=P_{\theta}\left(W_{r,k}^{(\rom{4})}=n\right)$ is given by

\begin{equation*}\label{eq: 1.1}
\begin{split}
P_{\theta}&\left(W_{r,k}^{(\rom{4})}=n\right)=\\
&\left\{
  \begin{array}{ll}
    {\sum_{i=r-1}^{n-rk}} \theta^{n-i}(1-\theta )^{i}  D_{1}(i,n-k-i,r-1)  & \text{if $n>r(k+1)-1$} \\
    \theta^{r(k+1)-1} & \text{if $n=r(k+1)-1$} \\
    0, & \text{if $n< r(k+1)-1$.}\\
  \end{array}
\right.
\end{split}
\end{equation*}
\end{corollary}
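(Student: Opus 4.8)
The plan is to obtain the stated corollary as a direct specialization of Theorem \ref{thmtype4q-nega} at $q=1$. Because the definition of $D_{q}^{k}(r,s,t)$ is stated for the entire range $0<q\leq 1$, the value $q=1$ already lies inside the admissible parameter range, so no limiting argument is strictly necessary: one simply substitutes $q=1$ into each of the three cases of the theorem and simplifies.

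First I would dispose of the two boundary cases, which carry over verbatim. For $n=r(k+1)-1$ the theorem gives the value $\theta^{r(k+1)-1}$, an expression free of $q$, so it is unaffected by setting $q=1$; and for $n<r(k+1)-1$ the value is identically $0$, again independent of $q$. Hence both boundary cases match the corresponding cases of the corollary without any computation.

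Next I would simplify the top-case sum $\sum_{i=r-1}^{n-rk}\theta^{n-i}q^{ik}\prod_{j=1}^{i}(1-\theta q^{j-1})D_{q}(i,n-k-i,r-1)$ factor by factor at $q=1$. The power $q^{ik}$ becomes $1$. Each factor of the product degenerates, since $1-\theta q^{j-1}$ evaluates to $1-\theta$ at $q=1$ for every $j$, so $\prod_{j=1}^{i}(1-\theta q^{j-1})$ collapses to $(1-\theta)^{i}$. Finally, $D_{q}(i,n-k-i,r-1)$ is, by definition, a polynomial in $q$ (a finite sum of monomials $q^{y_{2}+2y_{3}+\cdots+(r-1)y_{r}}$ over the admissible index set), so its value at $q=1$ assigns weight one to each admissible solution tuple; by the Remark following the definition of $D_{q}^{k}$ this count is exactly $D_{1}(i,n-k-i,r-1)$. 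Substituting these three simplifications gives $\sum_{i=r-1}^{n-rk}\theta^{n-i}(1-\theta)^{i}D_{1}(i,n-k-i,r-1)$, which is precisely the top case of the corollary.

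There is no genuine obstacle here; the argument is a routine specialization. The only point meriting explicit mention is the identification of the $q=1$ evaluation of the polynomial $D_{q}$ with the combinatorial number $D_{1}$, which is exactly the content of the Remark accompanying the definition of $D_{q}^{k}(r,s,t)$. Assembling the three cases then completes the proof.
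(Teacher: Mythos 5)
Your proposal is correct and coincides with the paper's own derivation: the corollary is obtained there precisely by setting $q=1$ in Theorem \ref{thmtype4q-nega}, with $q^{ik}\to 1$, $\prod_{j=1}^{i}(1-\theta q^{j-1})\to(1-\theta)^{i}$, and $D_{q}$ evaluated at $q=1$ giving the combinatorial count $D_{1}$ described in the accompanying remark. Nothing further is needed.
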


\section{$q$-negative binomial distribution of order $k$ in the $\ell$-overlapping case}
We examined the $q$-negative binomial distribution of order $k$ in the $\ell$-overlapping case. Let us consider the waiting time for the $r$-th occurrence of the $\ell$-overlapping success run of length exactly $k$. For $r\in N$ and $k\in N$, let $W_{r,k\ell}$ be the waiting time for the $r$-th appearance of the $\ell$-overlapping run of successes of length $k$. We will use the $\ell$-overlapping counting scheme, i.e., a success run of length  $k$ each of which may have an overlapping (common) part of length at most $\ell$ ($\ell=0,1,\ldots,k-1$) with the previous run of success of length $k$ that has already been enumerated. The support (range set) of $W_{r,k,\ell}$, $\mathfrak{R}(W_{r,k,\ell})$ is given by
\noindent
\begin{equation*}
\begin{split}
\mathfrak{R}(W_{r,k,\ell})=\{\ell+r(k-\ell),\ell+r(k-\ell)+1,\ldots\}.
\end{split}
\end{equation*}
\noindent
We now present a useful definition and lemma for the proofs of the theorem \ref{pmfl-overq-nega} in the sequel.
\noindent
\begin{definition}
For $0<q\leq1$, define
\begin{equation*}\label{eq: 1.1}
\begin{split}
 E_{q}^{k,l}(r,s,t)=\sum_{\substack{y_{1},y_{2},\ldots,y_{r}}}   q^{y_{2}+2y_{3}+\cdots+(r-1) y_{r}}\\
\end{split}.
\end{equation*}

\noindent
where the summation is performed over all integers $y_1,\ldots,y_{r}$ satisfying
\noindent

\begin{equation*}\label{eq:1}
\begin{split}
y_{1}+y_{2}+\cdots+y_{r}=s,
\end{split}
\end{equation*}
\noindent
\begin{equation*}\label{eq:1}
\begin{split}
D(y_{i})+\cdots+D(y_{r})=t,\ \text{and}
\end{split}
\end{equation*}
\noindent
\begin{equation*}\label{eq:2}
\begin{split}
y_{j}\geq 0,\quad j=1,\ \ldots,\ r.
\end{split}
\end{equation*}
\noindent
Here,
\begin{equation*}\label{eq:2}
\begin{split}
D(j)=\left\{
  \begin{array}{ll}
    \left[\frac{j-l}{k-l}\right], & \text{if}\ j\geq k,\\
    0, & \text{otherwise}
  \end{array}
\right.
\end{split}
\end{equation*}
\end{definition}
\noindent
Now, we present a recurrence relation useful for computing $E_{q}^{k,l}(r,s,t)$.\\
\noindent
\begin{lemma}
\label{eq_fun}
{\rm [\citet{kinaci2016number}]} For $0<q\leq1$, $E_{q}^{k,l}(r,s,t)$ obeys the following recurrence relation:
\noindent
\begin{equation*}\label{eq: 1.1}
\begin{split}
&E_{q}^{k,l}(r,s,t)=\\&\left\{\begin{array}{ll}
\sum_{j=0}^{k-1} q^{(r-1)j} E_{q}^{k,l}(r-1,s-j,t)+\\
\sum_{j=k}^{s} q^{(r-1)j} E_{q}^{k,l}(r-1,s-j,t-D(j)), & \text{if $r>1$, $s\geq 0$ and $t\geq 0$} \\
1,& \text{ if $r=1$, $s\geq k$ and $t=\frac{s-l}{k-l}$} \\
& \text{ or $r=1$, $0\leq s< k$ and $t=0$}, \\
0,& \text{otherwise.}\\
\end{array}
\right.
\end{split}
\end{equation*}
\end{lemma}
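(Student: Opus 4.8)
The plan is to prove this recurrence exactly as the parallel Lemmas \ref{aq_fun}, \ref{bq_fun}, \ref{cq_fun} and \ref{dq_fun} for $A_q^k$, $B_q^k$, $C_q^k$ and $D_q^k$, namely by peeling off the last summation variable $y_r$ from the defining sum of $E_{q}^{k,l}(r,s,t)$. The guiding observation is that the $q$-weight $q^{y_2+2y_3+\cdots+(r-1)y_r}$ assigns the coefficient $(r-1)$ to $y_r$, so fixing $y_r=j$ splits the weight as $q^{(r-1)j}$ times $q^{y_2+2y_3+\cdots+(r-2)y_{r-1}}$, and the latter is precisely the weight that defines $E_{q}^{k,l}(r-1,\cdot,\cdot)$ on the first $r-1$ variables. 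This is the only structural fact that makes the argument go through, and it is built into the exponent.

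For $r>1$ I would condition on the value $y_r=j$, letting $j$ range over $0,1,\dots,s$. For each fixed $j$ the two constraints $y_1+\cdots+y_r=s$ and $D(y_1)+\cdots+D(y_r)=t$ reduce to $y_1+\cdots+y_{r-1}=s-j$ and $D(y_1)+\cdots+D(y_{r-1})=t-D(j)$, while the weight factors as $q^{(r-1)j}$ times the residual weight $q^{y_2+2y_3+\cdots+(r-2)y_{r-1}}$. The inner sum over the remaining nonnegative integers is then exactly $E_{q}^{k,l}(r-1,s-j,t-D(j))$ by definition, so $E_{q}^{k,l}(r,s,t)=\sum_{j=0}^{s}q^{(r-1)j}E_{q}^{k,l}(r-1,s-j,t-D(j))$. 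Splitting this sum at $j=k$ and invoking the piecewise definition of $D$, namely $D(j)=0$ for $0\le j\le k-1$ while $D(j)=\left[\frac{j-l}{k-l}\right]$ for $j\ge k$, yields exactly the two-sum expression in the statement.

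The base case $r=1$ is immediate: the single variable must satisfy $y_1=s$ with an empty $q$-exponent, so $E_{q}^{k,l}(1,s,t)$ equals $1$ when $D(s)=t$ and $0$ otherwise; unwinding $D$ reproduces the conditions $t=\left[\frac{s-l}{k-l}\right]$ for $s\ge k$ and $t=0$ for $0\le s<k$. I expect the only real care to lie in the boundary bookkeeping: verifying that whenever the feasibility conditions fail (for instance $t<0$, or $s<0$, or $s$ too small to admit $t$ runs), both sides genuinely vanish, so that the ``otherwise $0$'' branch is consistent and the recursion terminates correctly at the base case. This is the main, if modest, obstacle; everything else is the routine weight-and-constraint bookkeeping that the coefficient structure of the exponent renders automatic.
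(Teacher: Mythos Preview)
Your argument is correct and is exactly the standard derivation: condition on $y_r=j$, use that the exponent of $q$ assigns coefficient $r-1$ to $y_r$, and recognise the residual sum as $E_q^{k,l}(r-1,s-j,t-D(j))$. The paper does not supply its own proof of this lemma; it is simply quoted from \citet{kinaci2016number}, just as Lemmas~\ref{aq_fun}--\ref{dq_fun} are quoted from their respective sources. So there is nothing in the paper to compare your route against, but your proof is the natural one and matches the pattern one would expect from the cited source.

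One small point worth flagging: in the base case $r=1$, $s\ge k$, the lemma as stated in the paper writes $t=\frac{s-l}{k-l}$ without the floor, whereas the definition of $D$ gives $D(s)=\left[\frac{s-l}{k-l}\right]$. Your version with the floor is the correct one; the missing brackets in the displayed statement appear to be a typo carried over from the original source.
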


\begin{remark}
{\rm
Note that $E_1^{k,l}(r,s,t)$ is the number of integer solutions $(y_{1},\ldots,y_{r})$ of
\noindent

\begin{equation*}\label{eq:1}
\begin{split}
y_{1}+y_{2}+\cdots+y_{r}=s,
\end{split}
\end{equation*}
\noindent
\begin{equation*}\label{eq:1}
\begin{split}
D(y_{1})+\cdots+D(y_{r})=t,\ \text{and}
\end{split}
\end{equation*}
\noindent
\begin{equation*}\label{eq:2}
\begin{split}
y_{j}\geq 0,\quad j=1,\ \ldots,\ r.
\end{split}
\end{equation*}
\noindent
Here,
\begin{equation*}\label{eq:2}
\begin{split}
D(j)=\left\{
  \begin{array}{ll}
    \left[\frac{j-l}{k-l}\right], & \text{if}\ j\geq k,\\
    0, & \text{otherwise}
  \end{array}
\right.
\end{split}
\end{equation*}
\noindent
indicating that the total number of arrangements of $s$ balls in $r$ distinguishable cells, yielding $t$ $l$-overlapping runs of balls of length $k$ is given by
\noindent
\begin{equation*}\label{eq: 1.1}
\begin{split}
E_{1}^{k,l}(r,s,t)=\sum_{a=1}^{\text{min}(r,t)}{r \choose a}{t-1 \choose a-1}C(s-al-(k-l)s;a,r-a;k-l-1,k-1).
\end{split}
\end{equation*}
\noindent
Here, $C(\alpha ;i,r-i;m-a,n-1)$ denotes the total number of integer solutions arrangements of $\alpha$ indistinguishable balls into $r$ distinguishable cells, $i$ of which have capacity $m-1$, and each of the rest of $r-i$ has capacity $n-1$. This number can be expressed as follows:
\noindent
\begin{equation*}\label{eq: 1.1}
\begin{split}
C(\alpha;i,r-i;m-a,n-1)=\sum_{j_1=0}^{[\alpha/m]}\sum_{j_2=0}^{[(\alpha-mj_1)/n]}(-1)^{j_1+j_2}{i \choose j_1}{r-i\choose j_2}{\alpha-mj_1-nj_2+r-1 \choose r-1}
\end{split}
\end{equation*}
\noindent
(See \citet{makri2007polya}).
}
\end{remark}
\noindent
The probability function of the $q$-negative binomial distribution of order $k$ in the $\ell$-overlapping case is obtained from the following theorem \ref{pmfl-overq-nega} as shown below:
\noindent
\begin{equation*}
P_{q,\theta}\left(W_{r,k,\ell}=n\right)=0\ \text{for}\ 0\leq n<l+r(k-l),
\end{equation*}
\noindent
and hence, we focus on determining the PMF for $n\geq l+r(k-l)$.
\noindent
\begin{theorem}
\label{pmfl-overq-nega}
The PMF $w_{q,\ell}(n;r,k;\theta)=P_{q,\theta}\Big(W_{r,k,\ell}=n\Big)$ is given by
\begin{equation*}\label{eq: 1.1}
\begin{split}
&P_{q,\theta}\Big(W_{r,k,\ell}=n\Big)=\\
&\left\{
  \begin{array}{ll}
   \sum_{t=k}^{r(k-l)+l}  \sum_{i=\left[\frac{x-l-1-r(k-l)}{k}\right]+1}^{n-r(k-l)-l}\theta^{n-i}q^{it}{\prod_{{j}=1}^i}(1-\theta q^{j-1})\times\\
      E_{q}^{k,l}(i,n-t-i,r-D(t)),  & \text{if $n\geq l+r(k-l)$} \\
    \theta^{l+r(k-l)}, & \text{if $n=l+r(k-l)$} \\
    0, & \text{if}\ n < l+r(k-l).\\
  \end{array}
\right.
\end{split}
\end{equation*}
\end{theorem}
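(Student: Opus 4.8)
The plan is to follow the template established in the proofs of Theorems \ref{thmtype3q-nega} and \ref{thmtype4q-nega}, adapting the combinatorial bookkeeping to the $\ell$-overlapping enumeration encoded by the function $D(\cdot)$. First I would dispatch the boundary cases. When $n=l+r(k-l)$, the sequence must consist entirely of successes---this is the shortest $q$-sequence producing $r$ $\ell$-overlapping runs of length $k$---so its probability is $(\theta q^{0})^{l+r(k-l)}=\theta^{l+r(k-l)}$, while the probability clearly vanishes for $n<l+r(k-l)$.

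For $n>l+r(k-l)$ I would argue that every sequence in the event $\{W_{r,k,\ell}=n\}$ ends in a maximal success run of some length $t$ immediately preceded by a failure, and that this terminal run alone accounts for $D(t)$ of the $r$ prescribed runs. Since $D(t)=\left[\frac{t-l}{k-l}\right]$ increases from $1$ to $r$ as $t$ ranges over $k,\ldots,r(k-l)+l$, the event decomposes as the disjoint union
\begin{equation*}
\left\{W_{r,k,\ell}=n\right\}=\bigcup_{t=k}^{r(k-l)+l}\left\{N_{n-t,k,\ell}=r-D(t)\ \wedge\ X_{n-t}=0\ \wedge\ X_{n-t+1}=\cdots=X_{n}=1\right\}.
\end{equation*}
I would then refine each piece by conditioning on $F_{n}=i$, the total number of failures, whose admissible range is dictated by requiring $n-t-i\geq 0$ together with the feasibility of realizing $r-D(t)$ runs among the first $i$ success blocks. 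The conditional structure \eqref{failureprobofq2} shows that the $i$ failures split the first $n-t$ trials into success cells of lengths $y_{1},\ldots,y_{i}$, and the probability of a single such arrangement telescopes as $(\theta q^{0})^{y_{1}}(1-\theta q^{0})\cdots(\theta q^{i-1})^{y_{i}}(1-\theta q^{i-1})(\theta q^{i})^{t}$.

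The remaining step is routine: exponentiation algebra collapses this product to $\theta^{n-i}q^{it}\prod_{j=1}^{i}(1-\theta q^{j-1})\,q^{y_{2}+2y_{3}+\cdots+(i-1)y_{i}}$, and summing the $q$-weight over all nonnegative tuples with $y_{1}+\cdots+y_{i}=n-t-i$ and $D(y_{1})+\cdots+D(y_{i})=r-D(t)$ is, by the Definition of $E_{q}^{k,l}$, precisely $E_{q}^{k,l}(i,n-t-i,r-D(t))$, whose values are furnished by the recurrence of Lemma \ref{eq_fun}. Assembling the double sum over $t$ and $i$ then yields the claimed formula. The main obstacle I anticipate is not the algebra but the index bookkeeping: one must verify that $D(t)$ sweeps out exactly $\{1,\ldots,r\}$ on $t\in\{k,\ldots,r(k-l)+l\}$ and that the lower limit on $i$ matches the definitional support of $E_{q}^{k,l}$, so that the disjoint decomposition genuinely exhausts $\{W_{r,k,\ell}=n\}$ without introducing spurious zero contributions.
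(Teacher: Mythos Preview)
Your proposal is correct and follows essentially the same approach as the paper's own proof: the same decomposition of $\{W_{r,k,\ell}=n\}$ over the terminal run length $t$, the same conditioning on $F_{n}=i$, the same telescoping of the $q$-weighted probability, and the same identification of the inner sum with $E_{q}^{k,l}(i,n-t-i,r-D(t))$. Your explicit attention to the range of $D(t)$ and the admissible range of $i$ is, if anything, slightly more careful than the paper's treatment of the index bounds.
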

\begin{proof}
We first examine $w_{q}^{(\ell)}(l+r(k-l);r,k;\theta)$. Clearly, $w_{q}^{(\ell)}(l+r(k-l);r,k;\theta)=\big(\theta q^{0}\big)^{l+r(k-l)}=\theta^{l+r(k-l)}$. Hereafter, we assume $n > l+r(k-l).$ From the definition of $W_{r,k,\ell}$, every sequence of $n$ binary trials belonging to the event $W_{r,k,\ell}=n$ must end with $k$ successes, and the $r$-th $\ell$-overlapping success runs occur in the $n$th trial. Let us consider $W_{r,k,\ell}=n$ end with $t$ successes. The event $W_{r,k,\ell}=n$ can be expressed as $$\left\{W_{r,k,\ell}=n\right\}=\bigcup_{t=k}^{r(k-l)+l}\left\{N_{n-t,k,\ell}=r-D(t)\ \wedge\ X_{n-t}=0\ \wedge\ X_{n-t+1}=\cdots=X_{n}=1\right\}.$$
\noindent
We partition the event $W_{r,k,\ell}=n$ into disjointed events given by $F_{n}=i,$ for $i=\left[\frac{x-l-1-r(k-l)}{k}\right]+1,\ldots,n-r(k-l)-l.$ By adding the probabilities, we get
\noindent
\begin{equation*}\label{eq:bn}
\begin{split}
P_{q,\theta}\left(W_{r,k,\ell}=n\right)=\sum_{t=k}^{r(k-l)+l} \sum_{i=\left[\frac{x-l-1-r(k-l)}{k}\right]+1}^{n-l-r(k-l)}P_{q,\theta}\Big(N_{n-t,k,\ell}&=r-D(t)\ \wedge\ X_{n-t}=0\ \wedge\\
&F_{n}=i\ \wedge\ X_{n-t+1}=\cdots=X_{n}=1\Big).\\
\end{split}
\end{equation*}
\noindent
If the number of zeroes in the first $n-t$ trials is equal to $i,$ that is, $F_{n-t}=i,$ then in each of the $(n-t+1)$ to $n$-th trials, the probability of success is
\noindent
\begin{equation*}\label{eq: kk}
\begin{split}
p_{n-t+1}=\cdots=p_{n}=\theta q^{i}.
\end{split}
\end{equation*}
\noindent
The above expression can be rewritten as follows.
\noindent
\begin{equation*}\label{eq:bn1}
\begin{split}
&P_{q,\theta}\left(W_{r,k,\ell}=n\right)\\
&=\sum_{t=k}^{r(k-l)+l} \sum_{i=\left[\frac{x-l-1-r(k-l)}{k}\right]+1}^{n-l-r(k-l)}P_{q,\theta}\Big(N_{n-t,k,\ell}=r-D(t)\ \wedge\ X_{n-t}=0\ \wedge\ F_{n-t}=i\Big)\\
&\quad\quad\quad\quad\quad\quad\quad\quad\quad\times P_{q,\theta}\Big(X_{n-t+1}=\cdots =X_{n}=1\mid F_{n-t}=i\Big)\\
&=\sum_{t=k}^{r(k-l)+l} \sum_{i=\left[\frac{x-l-1-r(k-l)}{k}\right]+1}^{n-l-r(k-l)}P_{q,\theta}\Big(N_{n-t,k,\ell}=r-D(t)\ \wedge\ X_{n-t}=0\ \wedge\ F_{n-t}=i\Big)\Big(\theta q^{i}\Big)^{k}.\\
\end{split}
\end{equation*}
\noindent
An element of the event $\Big\{W_{r,k,\ell}=n,\ F_{n}=i\Big\}$ is an ordered sequence thatr consists of $n-i$ successes and $i$ failures such that the length of the success run is nonnegative integer with $r$ overlapping runs of success of length $k$ and end with $t (t=k,\ldots,r(k-\ell)+\ell)$ successes. The number of these sequences can be derived as follows: First, we distribute the $i$ failures; $i$ failures form $i+1$ cells. Next, we distribute the $n-i-t$ successes into $i$ distinguishable cells as follows.
\noindent
\begin{equation*}\label{eq: 1.1}
\begin{split}
{\underbrace{1\ldots1}_{y_{1}}}0{\underbrace{1\ldots1}_{y_{2}}}0\ldots0{\underbrace{1\ldots1}_{y_{i-1}}}0{\underbrace{1\ldots1}_{y_{i}}}0{\boxed{{{\underbrace{1\ldots1}_{t}}}}}
\end{split}
\end{equation*}
\noindent
with $i$ 0s and $n-i$ 1s, where the length of the first one-run is $y_{1}$, the length of the second one-run is $y_{2}$,..., the length of the $(i)$-th one-run is $y_{i}$. The probability of the event $\left\{W_{r,k,\ell}=n,\ F_{n}=i\right\}$ is given by
\noindent
\begin{equation*}\label{eq: 1.1}
\begin{split}
(\theta q^{0})^{y_{1}}(1-\theta q^{0})(\theta q^{1})^{y_{2}}\left(1-\theta q^{1}\right)\cdots(\theta q^{i-1})^{y_{i}}(1-\theta q^{i-1})\left(\theta q^{i}\right)^{t}.
\end{split}
\end{equation*}
\noindent
We use simple exponentiation algebra arguments to simplify
\noindent
\begin{equation*}\label{eq: 1.1}
\begin{split}
&\theta^{y_1+\cdots+y_{i}+t}q^{it}{\prod_{{j}=1}^i}\left(1-\theta q^{j-1}\right)q^{y_{2}+2y_{3}+\cdots+(i-1) y_{i}}\\
&\theta^{n-i}q^{it}{\prod_{{j}=1}^i}\left(1-\theta q^{j-1}\right)q^{y_{2}+2y_{3}+\cdots+(i-1) y_{i}}.
\end{split}
\end{equation*}
\noindent
However, $y_{j}$s are nonnegative integers such that ${y_{1}}+{y_{2}}+\cdots+{y_{i}}=n-t-i$ and
\noindent
\begin{equation*}\label{eq: 1.1}
\begin{split}
D(y_{1})+\cdots+D(y_{i})+D(t)=r
\end{split}
\end{equation*}
\noindent
so that
\noindent
\begin{equation*}\label{eq: 1.1}
\begin{split}
&P_{q,\theta}\left(W_{r,k,\ell}=n,\ S_{n}=i\right)\\
&=\sum_{t=k}^{r(k-l)+l}\theta^{n-i}q^{ik}{\prod_{{j}=1}^i}\left(1-\theta q^{j-1}\right)  {\mathop{\sum...\sum}_{\substack{y_{1}+y_{2}+\cdots+y_{i}=n-t-i\\ D(y_{1})+\cdots+D(y_{i})+D(t)=r
\\y_{1}\geq 0,\ldots,y_{i}\geq 0}}}   q^{y_{2}+2y_{3}+\cdots+(i-1) y_{i}}.\\
\end{split}
\end{equation*}
\noindent
Summing the above with respect to $i=\left[\frac{x-l-1-r(k-l)}{k}\right]+1, ...,n-l-r(k-l)$, we get\\
\noindent
\begin{equation*}\label{eq: 1.1}
\begin{split}
\sum_{t=k}^{r(k-l)+l}\sum_{i=\left[\frac{x-l-1-r(k-l)}{k}\right]+1}^{n-l-r(k-l)}\theta^{n-i}q^{ik}{\prod_{{j}=1}^i}\left(1-\theta q^{j-1}\right)  {\mathop{\sum...\sum}_{\substack{y_{1}+y_{2}+\cdots+y_{i}=n-t-i\\ D(y_{1})+\cdots+D(y_{i})+D(t)=r
\\y_{1}\geq 0,\ldots,y_{i}\geq 0}}}   q^{y_{2}+2y_{3}+\cdots+(i-1) y_{i}}\\
\end{split}
\end{equation*}
From lemma \ref{eq_fun}, we can rewrite as follows.
\begin{equation*}\label{eq: 1.1}
\begin{split}
\sum_{t=k}^{r(k-l)+l}\sum_{i=\left[\frac{x-l-1-r(k-l)}{k}\right]+1}^{n-l-r(k-l)} \theta^{n-i}q^{it}{\prod_{{j}=1}^i}(1-\theta q^{j-1})  E_{q}^{k,l}(i,n-t-i,r-D(t)).
\end{split}
\end{equation*}
\noindent
Thus, the proof is completed.

%
%
%
%

\end{proof}
\noindent
For $q=1$, from Theorem \ref{pmfl-overq-nega}, the PMF of the negative binomial distribution of order $k$ for $\ell$-overlapping success runs of length $k$ in Bernoulli trials with success probability $\theta$ is obtained as follows:
\begin{corollary}
The PMF $w^{(\ell)}(n;r,k;\theta)=P_{\theta}\left(W_{r,k,\ell}=n\right)$ is given by
\begin{equation*}\label{eq: 1.1}
\begin{split}
&P_{\theta}\Big(W_{r,k,\ell}=n\Big)=\\
&\left\{
  \begin{array}{ll}
    \sum_{t=k}^{r(k-l)+l}\sum_{i=\left[\frac{x-l-1-r(k-l)}{k}\right]+1}^{n-r(k-l)-l} \theta^{n-i}(1-\theta )^{i}      E_{1}^{k,l}(i,n-t-i,r-D(t))  & \text{if $n> l+r(k-l)$,} \\
    \theta^{l+r(k-l)} & \text{if $n=l+r(k-l)$,} \\
    0, & \text{if}\ n < l+r(k-l).\\
  \end{array}
\right.
\end{split}
\end{equation*}

\end{corollary}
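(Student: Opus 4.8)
The plan is to follow the combinatorial strategy already used for the overlapping (Type \rom{3}) case in Theorem \ref{thmtype3q-nega}, of which the present statement is the $\ell$-parametrised generalisation. First I would dispose of the boundary value: when $n = \ell + r(k-\ell)$ the only admissible outcome is the all-success string of that length, which carries no failures, so every trial sits in the first geometric block and the probability is $(\theta q^0)^{\ell + r(k-\ell)} = \theta^{\ell + r(k-\ell)}$ by \eqref{failureprobofq2}. This settles the middle line of the claimed PMF.

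For $n > \ell + r(k-\ell)$ I would condition on the length $t$ of the terminal success run, i.e. the maximal block of $1$'s ending at trial $n$. Writing $\{W_{r,k,\ell}=n\}$ as the disjoint union over $t = k,\ldots,r(k-\ell)+\ell$ of the events $\{N_{n-t,k,\ell} = r - D(t) \wedge X_{n-t}=0 \wedge X_{n-t+1}=\cdots=X_n=1\}$, one records that the terminal block contributes exactly $D(t)$ of the $\ell$-overlapping runs, while the prefix of length $n-t$ must contribute the remaining $r - D(t)$. I would then refine each such event by the total number of failures $F_n = i$, noting that once $F_{n-t}=i$ the $t$ terminal successes all lie in the $(i+1)$-st geometric block and hence each occurs with probability $\theta q^i$.

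The enumerative heart of the argument is to count and weight the admissible prefixes. The $i$ failures split the first $n-t$ trials into cells of $1$'s of lengths $y_1,\ldots,y_i$ with $y_1+\cdots+y_i = n-t-i$, and the $\ell$-overlapping bookkeeping imposes $D(y_1)+\cdots+D(y_i) = r - D(t)$. A single such arrangement has probability $\prod_{j=1}^{i}(\theta q^{j-1})^{y_j}(1-\theta q^{j-1})\cdot(\theta q^i)^t$, which collapses by elementary exponent arithmetic to $\theta^{n-i} q^{it} \prod_{j=1}^{i}(1-\theta q^{j-1})\, q^{y_2 + 2y_3 + \cdots + (i-1)y_i}$. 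Summing the trailing $q$-power over all feasible tuples $(y_1,\ldots,y_i)$ is, by the very definition of the polynomial, equal to $E_q^{k,l}(i,n-t-i,r-D(t))$, and Lemma \ref{eq_fun} guarantees that this is precisely the quantity produced by the stated recurrence. Summing over $i$ and then over $t$ yields the top line of the PMF.

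The step I expect to be the genuine obstacle is the justification that the displayed union over $t$ really partitions $\{W_{r,k,\ell}=n\}$. Unlike the overlapping case, where $k-\ell=1$ forces every additional success to complete a fresh run, for general $\ell$ a new $\ell$-overlapping run is completed at trial $n$ only when the terminal block has a length compatible with the increment of $D(\cdot)$; I would therefore need to argue carefully that the conditions $N_{n-t,k,\ell}=r-D(t)$ together with $X_{n-t}=0$ and $X_{n-t+1}=\cdots=X_n=1$ force $N_{n-1,k,\ell}=r-1$, so that the $r$-th run lands exactly at $n$, and that no outcome is counted under two distinct values of $t$. The remaining manipulations---the conditional-probability factorisation and the exponent simplification---are routine adaptations of the Type \rom{3} proof and should present no difficulty.
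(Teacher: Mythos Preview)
Your argument is essentially correct, but it is aimed at the wrong target. The statement you are asked to prove is the \emph{corollary} for i.i.d.\ Bernoulli trials, i.e.\ the specialisation $q=1$, and the paper obtains it in one line: it simply sets $q=1$ in Theorem~\ref{pmfl-overq-nega}, which has already been established for general $q$. What you have sketched---conditioning on the terminal run length $t$, then on the failure count $i$, writing the probability of a typical configuration as $\theta^{n-i} q^{it}\prod_{j=1}^{i}(1-\theta q^{j-1})\,q^{y_2+2y_3+\cdots+(i-1)y_i}$, and recognising the inner sum as $E_q^{k,l}(i,n-t-i,r-D(t))$---is precisely the paper's proof of Theorem~\ref{pmfl-overq-nega}, kept at the level of general $q$. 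So your route is not wrong, merely redundant: you are re-deriving the theorem and would still need to append the trivial substitution $q=1$ to land on the corollary as stated. If Theorem~\ref{pmfl-overq-nega} is available to cite, the corollary requires no new combinatorics at all.

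The concern you raise about the disjointness of the union over $t$ is legitimate and applies equally to the paper's proof of Theorem~\ref{pmfl-overq-nega}; the paper does not spell this out either. For the corollary itself, however, this issue is already absorbed into the theorem and need not be revisited.
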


\begin{remark}
{\rm
$N_{n,k,\ell}$ is a random variable related to $W_{r,k,\ell}$, and it denotes $N_{n,k,\ell}$ the number of occurrences of success run of length $k$ in the sequence of $n$ trials. Because the events $\left(N_{n,k,\ell}\geq r\right)$ and $\left(W_{r,k,\ell}\leq n\right)$ are equivalent, an alternative formula for the PMF of $q$-negative binomial distribution of order $k$ in the $\ell$-overlapping case, can be easily obtained using the dual relation between the binomial and negative binomial distribution of order $k$ in the $\ell$-overlapping case as follows:
\noindent
\begin{equation*}
P_{q,\theta}\left(N_{n,k,\ell}\geq r\right)=P_{q,\theta}\left(W_{r,k,\ell}\leq n\right).
\end{equation*}
\noindent
Consequently, the PMF $w_{q}^{(\ell)}(n;k,r,\ell;\theta)=P_{q,\theta}(W_{r,k,\ell}=n)$ is implicitly determined by
\begin{equation}
\label{alpmf:;loverq-nega}
w_{q}^{\ell}(n;k,r,\ell;\theta)=\sum_{x=0}^{r-1}f_{q}^{(\ell)}(x;n-1,k,\ell;\theta)-f_{q}^{(\ell)}(x;n,k,\ell;\theta)\ \text{for}\ n\geq \ell+r(k-\ell)\ \text{and}\ r\geq 1,
\end{equation}
where the probabilities $f_{q}^{\ell}(x;n-1,k,\ell;\theta)=P_{q,\theta}(N_{n-1,k,\ell}=x)$ and $f_{q}^{\ell}(x;n,k,\ell;\theta)=P_{q,\theta}(N_{n,k,\ell}=x)$ already obtained by \citet{kinaci2016number} as follows:
\begin{equation}
\label{pmf:loverq-binomial}
f_{q}^{(\ell)}(x;n,k,\ell;\theta)={\sum_{i=0}^{\nu(x)}} \theta^{n-i}{\prod_{j=1}^i}(1-\theta q^{j-1})E_{q}^{k,\ell}(i+1,n-i,x)\ \text{for}\ x=0,1,\ldots,\left[\frac{n-\ell}{k-\ell}\right],
\end{equation}
where $\nu(x)=\left\{
  \begin{array}{ll}
    n, & \hbox{if $x=0$} \\
    n-(x(k-\ell)+\ell), & \hbox{otherwise.}
  \end{array}
\right.$\\
Usually, the obtained expression \eqref{pmf:loverq-binomial} for $P_{q,\theta}(W_{r,k,\ell}=n)$ is computationally faster than that obtained using \eqref{alpmf:;loverq-nega}.
}
\end{remark}

%
%
%

\bibliography{biblio}
\addcontentsline{toc}{section}{References}
\bibliographystyle{apalike}

\end{document}